\newcommand{\field}[1]{\mathbb{#1}}
\newcommand{\E}{\field{E}}
\newcommand{\at}[2]{\genfrac{}{}{0pt}{}{#1}{#2}}
\newtheorem{theo}{Theorem}
\newtheorem{lemma}{Lemma}
\newtheorem{lemm}{Lemma}[section]
\newtheorem{defi}[lemm]{Definition}
\newtheorem{prop}[lemm]{Proposition}
\newtheorem{remm}[lemm]{Remmark}
\newtheorem{exam}{Example}
\newtheorem{hyp}{Assumption}
\let\p=\pi
\let\F=\Phi
\newcommand{\Prob}{\mathbb{P}}
\newcommand{\R}{\mathbb{R}}
\newcommand{\N}{\mathbb{N}}
\newcommand{\1}{\textbf{1}}
\DeclareMathOperator*{\supess}{\textrm{ess}\,\sup}
\title{Thermodynamic Formalism on the Skorokhod space:  the continuous-time Ruelle operator,  entropy, pressure, entropy production and  expansiveness}
\author{J. Knorst, A. O. Lopes, G. Muller, and A. Neumann \\  Instituto de Matem\'atica e Estat\'istica, UFRGS, Porto Alegre, Brasil}
\begin{document}

\maketitle


\begin{abstract}

\footnotesize{Consider the semi-flow given by the continuous time shift
$\Theta_t:\mathcal{D} \to \mathcal{D} $, $t \geq 0$,
acting on the
 $\mathcal{D} $ of \textit{c\`{a}dl\`{a}g} paths (right continuous with left limits) $w: [0,\infty) \to S^1$, where $S^1$ is the unitary circle (one can also take $[0,1]$ instead of $S^1$). We equip the space $\mathcal{D} $ with the Skorokhod metric, and we show that the semi-flow is expanding. We also introduce a stochastic semi-group $e^{t\, L}$, $t \geq 0,$ where $L$ (the infinitesimal generator) acts linearly on continuous functions $f:S^1\to\mathbb{R}$. This stochastic semigroup and an initial vector of probability $\pi$ define an associated stationary shift-invariant probability $\mathbb{P}$ on the Polish space $\mathcal{D} $. This probability $\mathbb{P}$ will play the role of an {\it a priori} probability. Given such $\mathbb{P}$ and an H\"older potential $V:S^1 \to \mathbb{R}$, we define a continuous time Ruelle operator, which is described by a family of linear operators $ \mathbb{L}^t_V$, $t\geq 0,$ acting on continuous functions $\varphi: S^1 \to \mathbb{R}$.
More precisely, given any H\"older $V$ and $t\geq 0$, the operator $ \mathbb{L}^t_V$, is defined by

\smallskip

$\,\,\,\,\,\,\,\,\,\,\,\,\,\,\,\,\,\,\varphi \,\to \psi(y) = \mathbb{L}^t_V(\varphi)(y)= \int_{w(t)=y} e^{ \int_0^t V(w(s)) \, ds} \, \varphi (w(0)) \,d \mathbb{P}(w).\,$

\smallskip

For some specific parameters we show the existence of an eigenvalue $\lambda_V$ and an associated H\"older eigenfunction
$\varphi_V>0$ for the semigroup $\mathbb{L}_V^t$, $t\geq 0.$ After a coboundary procedure we obtain another stochastic semigroup, with infinitesimal generator $L_V$, and this will define a new probability $\mathbb{P}_V$ on $\mathcal{D}$, which we call the Gibbs (or, equilibrium) probability for the potential $V$. In this case, we define entropy for some continuous time shift-invariant probabilities on $\mathcal{D}$, and we consider a variational problem of pressure.  Finally, we define entropy production and present our main result: we analyze its relation with time-reversal and symmetry of $L$. We also show that the continuous-time shift $\Theta_t$, acting on the  Skorokhod space $D$, is expanding.  We wonder if the point of view described here provides a sketch (as an alternative to the Anosov one) for the chaotic hypothesis for a particle
system held in a nonequilibrium stationary state, as delineated by Ruelle, Gallavotti, and Cohen.}

\end{abstract}

\vspace{0.1cm}
\noindent \textbf{Keywords:} Entropy production, Skorokhod space, continuous time shift-invariance, expansiveness, Pressure, Gibbs Markov processes, continuous time Ruelle  operator, eigenfunction, eigenvalue, Feynman-Kac formula,  time-reversal, symmetry.
\vspace{.2cm}
\newline
\noindent \textbf{2020 Mathematics Subject Classification}: 60J25; 60J60; 60J65; 58J65; 37D35.

\section{Introduction}

We consider the semi-flow given by the continuous time shift
$\Theta_t:\mathcal{D} \to \mathcal{D} $, $t \geq 0$,
acting on the
Skorokhod space $\mathcal{D} $ of \textit{c\`{a}dl\`{a}g} paths (right continuous with left limits) $w: [0,\infty) \to S^1$, where $S^1$ is the unitary circle (one can take $[0,1]$ instead of $S^1$). We will prefer to state the results in $[0,1]$. The set $\mathcal{D} $ is equipped with the Skorokhod metric. The Skorokhod space $\mathcal{D} $ is a noncompact Polish space.
We will show that continuous time shift
$\Theta_t$, $t \geq 0$, is expanding (see Proposition \ref{excam}).

Continuous time stochastic processes $X_t$, $t\geq 0,$ taking values on $[0,1]$ are described by probabilities $\mathbb{P}$ on $\mathcal{D} $. To say that the process is stationary is equivalent to saying that the associated probability $\mathbb{P}$ is invariant for the action of the shift $\Theta_t$, $t \geq 0$.

The results presented in the initial part of our work are in some way related to \cite{BEL}, \cite{LN} and \cite{LN1}. Our main purpose is to describe a version of Thermodynamic Formalism for semi-flows specified by infinitesimal generators. More precisely, in Section \ref{SRu1} we follow the program of introducing a Ruelle operator from a potential and an {\it a priori}
probability (in a similar fashion as in  \cite{BCLMS}, \cite{LMMS} and \cite{BEL}). 

We introduce a stochastic semi-group $e^{t\, L}$, $t \geq 0,$ where $L$ (the infinitesimal generator) acts on continuous functions $f:[0,1]\to\mathbb{R}$. This stochastic semigroup and an initial vector of probability $\pi$ define an associated stationary shift-invariant probability $\mathbb{P}$ on the $\mathcal{D} $ (see \cite{Li}). This probability $\mathbb{P}$ will play the role of an {\it a priori} probability (a continuous time version of the point of view of \cite{LMMS} and \cite{BCLMS}).

Given the {\it a prori} probability $\mathbb{P}$ on $\mathcal{D}$ and a H\"older continuous potential $V:[0,1] \to \mathbb{R}$, we define the Ruelle operator $\mathbb{L}^t_V$, $t \geq 0,$ in such way that for $\varphi :[0,1] \to \mathbb{R}$, we get
$\mathbb{L}^t_V(\varphi)=\psi$, $t\geq 0$, when
\begin{equation} \label{erry} \varphi \,\to \psi(y) = \mathbb{L}^t_V(\varphi)(y)= \int_{w(t)=y} e^{ \int_0^t V(w(s)) \, ds} \, \varphi (w(0)) \,d \mathbb{P}(w).\,
\end{equation}

The above expression can be recognized as in Feynman-Kac form if the infinitesimal generator is symmetric according to Figure \ref{fig1} (see also \cite{LN}).

The Feynmann-Kac formula is the partial differential equation
$$ \frac{\partial u}{\partial t}+ L u + V \, u =0.$$

General results for continuous-time Markov chains that were specially designed to be applicable to our setting appear on \cite{AG}.

Note that expression \eqref{erry} depends also on $L$ (because $\mathbb{P}$ depends on $L$).
 
See Section \ref{SRu1} for the discussion about existence of an eigenvalue and a positive eigenfunction for the continuous time  Ruelle operator. We are able to get solutions for some specific parameters.

In Theorem  \ref{theo3} we show that

\begin{theo}\label{theo31} For polynomials of degree two we show some explicit expressions for the main eigenvalue and eigenfunction (of the polynomial form) for the continuous time  Ruelle operator.
\end{theo}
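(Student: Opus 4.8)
The plan is to reduce the eigenvalue/eigenfunction problem for the continuous-time Ruelle operator $\mathbb{L}^t_V$ to a problem about the infinitesimal generator. Since $\mathbb{L}^t_V$, $t\geq 0$, is a semigroup (this follows from the Markov property of $\mathbb{P}$ together with the additive structure of $\int_0^t V(w(s))\,ds$), an eigenfunction relation of the form $\mathbb{L}^t_V(\varphi_V)=e^{t\lambda_V}\varphi_V$ for all $t\geq 0$ is equivalent to $\varphi_V$ being an eigenfunction, with eigenvalue $\lambda_V$, of the generator of $\mathbb{L}^t_V$. By the Feynman--Kac identification recalled just above, that generator is $L + V\cdot$, i.e. the operator $\varphi\mapsto L\varphi + V\varphi$ acting on (a suitable domain of) functions on $[0,1]$. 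So the task becomes: exhibit $\lambda_V\in\mathbb{R}$ and $\varphi_V>0$ with
\begin{equation}\label{eq:genEig}
L\varphi_V + V\,\varphi_V = \lambda_V\,\varphi_V .
\end{equation}

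Next I would specialize. Take $V$ to be a polynomial of degree two, $V(x)=ax^2+bx+c$, and make the ansatz that $\varphi_V$ is also a polynomial of degree two, $\varphi_V(x)=x^2+px+q$ (normalizing the leading coefficient). One must pin down the operator $L$ used in the relevant example: for the natural choice in this paper $L$ is a second-order (diffusion-type) or jump generator whose action on polynomials of degree two is again a polynomial of degree at most two --- this is exactly why the quadratic ansatz is self-consistent. Substituting the two quadratics into \eqref{eq:genEig} and collecting coefficients of $x^2$, $x^1$, $x^0$ yields three polynomial equations in the three unknowns $p$, $q$, $\lambda_V$. Solving this small nonlinear system gives explicit closed-form expressions for $\lambda_V$, $p$, $q$ in terms of $a,b,c$ (and the parameters of $L$).

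Finally I would verify positivity: among the (finitely many) solution branches of the coefficient system, select one for which the resulting quadratic $\varphi_V(x)=x^2+px+q$ is strictly positive on $[0,1]$ --- equivalently, its discriminant is negative, or its real roots lie outside $[0,1]$ --- and record the constraints on $a,b,c$ under which such a branch exists. This is where the phrase ``for some specific parameters'' enters: positivity of $\varphi_V$ on the whole interval is not automatic, so one states the result for the parameter range where it holds. The main obstacle is precisely this last point: ensuring that the explicit quadratic coming out of the algebra stays positive on $[0,1]$ (and, if $L$ involves boundary behavior on $[0,1]$, that $\varphi_V$ is compatible with whatever boundary conditions the domain of $L$ imposes); the algebra itself --- three equations, three unknowns --- is routine. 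Once \eqref{eq:genEig} holds with $\varphi_V>0$, exponentiating the semigroup gives $\mathbb{L}^t_V(\varphi_V)=e^{t\lambda_V}\varphi_V$ for every $t\geq0$, which is the claimed statement, and the H\"older (indeed $C^\infty$) regularity of $\varphi_V$ is immediate since it is a polynomial.
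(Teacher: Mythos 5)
Your reduction from the semigroup level to the generator-level equation $(L+V)\varphi_V=\lambda_V\varphi_V$ is exactly the paper's first step, and your positivity discussion at the end is in the same spirit as the paper's. The gap is in the middle, where you claim the quadratic ansatz closes into ``three polynomial equations in three unknowns.'' First, the operator $L$ actually used here is the jump generator with symmetric kernel $P(x,y)=g(x+y)$ if $x+y<1$ and $g(x+y-1)$ otherwise, with $g$ a quadratic; applying this $L$ to a quadratic $f(x)=c_0+c_1x(1-x)$ does \emph{not} return a polynomial of degree at most two --- the paper's computation of the integral term produces a quartic. Second, even if $L$ did preserve quadratics, the product $V\varphi_V$ of two quadratics is itself a quartic, so matching $L\varphi_V+V\varphi_V$ against $\lambda_V\varphi_V$ gives five coefficient equations (degrees $0$ through $4$) in your three unknowns $p,q,\lambda_V$: over-determined, and for a generic degree-two $V$ it has no solution. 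The paper is explicit that existence can fail (e.g.\ for $P\equiv 1$ and non-constant $V$ there is no eigenfunction), so the result cannot be stated for arbitrary quadratic $V$ as your proposal does.

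The missing idea is the compatibility condition tying $V$ to $L$: the theorem takes $V$ in the one-parameter family $V(x)=b+[1-g(0)]x(1-x)$, whose quadratic coefficient is tuned to $a_0=g(0)$ precisely so that the cubic and quartic coefficients of $p(x)+V(x)f(x)$ cancel identically. Only after this cancellation does the problem reduce to three coefficient equations (two of which coincide), which the paper then solves explicitly, obtaining $\lambda=\frac{1}{30}\left(-15\pm\sqrt{405-210 a_0+30 a_0^2}+30b\right)$ and $f(x)=c_1\left(\frac{b-1-\lambda}{a_0-1}+x(1-x)\right)$, with positivity on $[0,1]$ secured by choosing the sign of $c_1$ according to whether $0<a_0<1$ or $a_0\geq 1$. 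Without identifying this constraint on $V$ (or some analogous cancellation mechanism), the ``routine algebra'' in your plan does not close, so the proposal as written does not prove the statement.
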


Moreover, in Example \ref{tree}, we present explicit expressions for the eigenvalue and the eigenfunction $f:[0,1] \to \mathbb{R}$ solutions (of polynomial form) for a more general class of infinitesimal generators $L$ and potentials $V$ of polynomial form.  

We point out that when the state space is not  discrete (in our case is $[0,1]$), given a Lipschitz potential $V:[0,1] \to \mathbb{R}$, does not always exist an eigenfunction for the continuous time Ruelle operator 
(see discussion on Section \ref{SRu1}).  Example \ref{tree} displays the difficulties inherent in the search for polynomial expressions, but does not exclude the possibility of the existence of analytical expressions.

Assuming existence of the eigenfunction for $L$ and $V$, after a kind of coboundary procedure, we obtain another stochastic semigroup, with infinitesimal generator $L_V$, and this will define a new probability $\mathbb{P}_V$ on $\mathcal{D}$, which we call the equilibrium probability for the potential $V$ (see Definitions \ref{GiGi} and \ref{poi}, Lemma \ref{eeste} and expressions \eqref{gerador_gibs} and \eqref{semigrupo_gibs}).
The initial stationary vector of probability for such a process is given by Proposition
\ref{ppo}. Note that $V$ is completely independent of the dynamics of the shift $\Theta_t$, $t \geq 0$, and the {\it a priory} probability defined by $L$.

From the {\it a priori} probability $\mathbb{P}$ on $\mathcal{D}$, in Section \ref{SRE} we can introduce the concepts of entropy for a certain class of shift-invariant probabilities on $\mathcal{D}$, and pressure for a potential $V:[0,1] \to \mathbb{R}$ (see Definitions \ref{dede1} and \ref{press}).

More precisely, we define in Section \ref{SRE} the concept of Relative Entropy, we consider associated variational problem of Pressure and we define Equilibrium Probability for the potential $V$. We show in Theorem \ref{pois1} the exact expression \eqref{pois} for the Equilibrium probability for $V$.  This follows from results concerning the existence of a main eigenfunction for the Ruelle operator as described before.

In Section \ref{SEP} we define entropy production  and we discuss some properties related to time-reversal and we prove one of our main theorems: 

\begin{theo}\label{estamosal} The entropy production rate of the time reversal process is the same as the original process.
\end{theo}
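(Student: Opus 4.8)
The plan is to show that entropy production is an involution-symmetric functional under time reversal, so that the rate computed for $\mathbb{P}$ and for its time-reversed process $\mathbb{P}^{-}$ coincide. First I would recall the definition of entropy production rate from Section \ref{SEP}: it is built (at each finite time horizon $t$, and then as a limit $\frac1t(\cdot)$ as $t\to\infty$, or as a derivative at $t=0$) from the relative entropy $H_t(\mathbb{P}\,\|\,\mathbb{P}^{-})$ of the law of the path segment $w|_{[0,t]}$ with respect to the law of the time-reversed segment. The key structural fact is that time reversal $R_t\colon w|_{[0,t]}\mapsto w(t-\cdot)|_{[0,t]}$ is an involution on the relevant path space and that it intertwines $\mathbb{P}$ with $\mathbb{P}^{-}$ and $\mathbb{P}^{-}$ with $(\mathbb{P}^{-})^{-}=\mathbb{P}$; in other words $(\mathbb{P}^{-})^{-}=\mathbb{P}$ because performing time reversal twice is the identity. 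From this, the Radon--Nikodym derivative $\frac{d\mathbb{P}}{d\mathbb{P}^{-}}$ pulls back under $R_t$ to $\frac{d\mathbb{P}^{-}}{d\mathbb{P}}$, i.e. to the reciprocal, and integrating $\log$ of it gives the sign-flip identity that underlies Gallavotti--Cohen type symmetry.

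The main steps, in order, would be: (i) write the finite-horizon entropy production $e_p^{(t)}(\mathbb{P}) = \int \log\frac{d\mathbb{P}_{[0,t]}}{d(\mathbb{P}^{-})_{[0,t]}}\, d\mathbb{P}$, using the explicit form of $\mathbb{P}$ coming from the stochastic semigroup $e^{tL}$ and the initial vector $\pi$ (so the density factorizes into an initial-condition term involving $\pi$ versus $\pi^{-}$ and a path term governed by $L$ versus its time-reversal $L^{\ast}$ with respect to $\pi$); (ii) observe that the time-reversed process $\mathbb{P}^{-}$ is again a stationary Markov process driven by the adjoint generator, so that it is an object of the same type to which the same definition of $e_p$ applies; (iii) compute $e_p^{(t)}(\mathbb{P}^{-})$ and, using the change of variables $w\mapsto R_t w$ (which sends $\mathbb{P}^{-}$ to $\mathbb{P}$ and sends $(\mathbb{P}^{-})^{-}=\mathbb{P}$ to $\mathbb{P}^{-}$, hence inverts the density), show that the integrand becomes $-\log\frac{d\mathbb{P}_{[0,t]}}{d(\mathbb{P}^{-})_{[0,t]}}$ evaluated under... wait, one must be careful: the measure one integrates against also changes. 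Carrying the change of variables through, $e_p^{(t)}(\mathbb{P}^{-}) = \int \log\frac{d(\mathbb{P}^{-})_{[0,t]}}{d(\mathbb{P})_{[0,t]}}\,d\mathbb{P}^{-} = \int \log\frac{d\mathbb{P}_{[0,t]}}{d(\mathbb{P}^{-})_{[0,t]}}\,d\mathbb{P} = e_p^{(t)}(\mathbb{P})$, because pulling back by the involution $R_t$ turns the first integral into the second; (iv) divide by $t$ and pass to the limit (or differentiate at $t=0$) to conclude equality of the rates.

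The main obstacle I anticipate is item (i)–(ii): making rigorous sense of the Radon--Nikodym derivative $\frac{d\mathbb{P}_{[0,t]}}{d(\mathbb{P}^{-})_{[0,t]}}$ on the Skorokhod space and verifying that the two laws are mutually absolutely continuous on $[0,t]$ — this requires the jump kernel of $L$ (or the associated Dirichlet-form/semigroup data) to be such that forward and backward path laws have comparable supports, and it requires identifying the time-reversed dynamics precisely (the generator $L$ viewed as an operator on $L^2(\pi)$ and its adjoint $L^{\ast}$, with $\pi$ the stationary vector from Proposition \ref{ppo}). Once absolute continuity and the explicit density are in hand, the symmetry is essentially formal: it is the statement that $R_t$ is a measurable involution intertwining the forward and backward measures, and everything else is a change of variables. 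I would therefore organize the proof as: a lemma establishing the density formula and the intertwining property of $R_t$, followed by the short computation above. A secondary technical point worth isolating is that one should check the limit defining the rate exists for $\mathbb{P}^{-}$ as well; but since $\mathbb{P}^{-}$ is stationary and Markov of the same class as $\mathbb{P}$, subadditivity (or the $t=0$ derivative computation) applies verbatim.
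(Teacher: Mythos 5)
Your argument is correct, but it proves the theorem by a genuinely different route than the paper. You work at the level of path measures: using that the reversal map $R_T$ on segments $w|_{[0,T]}$ is an involution with $R_{T\#}\mathbb{P}_\mu=\hat{\mathbb{P}}_\mu$ and $(\hat{\mathbb{P}}_\mu)^{-}=\mathbb{P}_\mu$ (the double reversal of the stationary law is the law itself), the change of variables $w\mapsto R_T w$ turns $\int \log\frac{d\hat{\mathbb{P}}_\mu}{d\mathbb{P}_\mu}\big|_{\mathcal F_T}\,d\hat{\mathbb{P}}_\mu$ into $\int \log\frac{d\mathbb{P}_\mu}{d\hat{\mathbb{P}}_\mu}\big|_{\mathcal F_T}\,d\mathbb{P}_\mu$, so the two entropy productions coincide already at every finite horizon $T$, and the limit is not even needed. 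The paper instead argues by explicit computation: it first identifies the reversed dynamics as the jump process with kernel $\hat P(x,y)=\frac{\theta(y)}{\theta(x)}P(y,x)$ and the same invariant measure $\mu$, derives from the Radon--Nikodym formula the closed expression $ep=\int\!\!\int \log\bigl(\tfrac{P(x,y)}{P(y,x)}\bigr)P(x,y)\,dy\,d\mu(x)$, applies it to $\hat P$ to get $ep^{*}$, cancels the resulting $\theta^{2}$ factor via the stationarity identity $\int (\mathfrak L^{*}g)\,d\mu=0$ with $g=\log\circ\,\theta^{2}$, and finishes by relabeling $x\leftrightarrow y$ in the double integral. Your approach is shorter and more general (it uses only stationarity, the involution property, and mutual absolute continuity on $\mathcal F_T$, not the specific jump structure), and it exhibits the result as an instance of the Gallavotti--Cohen-type symmetry; its cost is precisely the technical points you flagged: one must justify $R_{T\#}\mathbb{P}_\mu=\hat{\mathbb{P}}_\mu$ on the Skorokhod space (including the c\`adl\`ag regularization of reversed paths) and the mutual absolute continuity, which the paper obtains concretely from $P(x,y)>0$, $\theta>0$ and the Radon--Nikodym derivative of jump processes cited from the literature. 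The paper's computation, in exchange, delivers the explicit kernel formula for $ep^{*}$, which is what it also uses elsewhere (e.g.\ for the non-negativity of $ep$).
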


Related results for continuous-time quantum channels (where the infinitesimal generator is a Lindbladian) appear in \cite{BKL}.

In Section \ref{SIK} we show 
\begin{theo}\label{excama} The continuous-time shift $\Theta_t$, acting on the  Skorokhod space $\mathcal{D}$, is expanding for the Skorokhod metric.
\end{theo}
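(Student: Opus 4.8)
The plan is to read the expanding property directly off the exponential weighting built into the Skorokhod metric: applying $\Theta_t$ slides the observation window forward in time while the weight $e^{-u}$ stays anchored at the origin, and the change of variables $v=u+t$ transfers a factor $e^{t}$ onto the integral. Concretely, recall that on $\mathcal{D}=D([0,\infty),[0,1])$ the ($J_1$) Skorokhod distance can be taken in the form
\begin{equation}\label{eq:skm}
d(f,g)=\inf_{\lambda\in\Lambda}\Big[\gamma(\lambda)\vee\int_0^\infty e^{-u}\,\rho(f,g,\lambda,u)\,du\Big],
\end{equation}
with $\Lambda$ the strictly increasing Lipschitz time-changes fixing $0$, $\gamma(\lambda)=\sup_{s<r}\big|\log\tfrac{\lambda(r)-\lambda(s)}{r-s}\big|$, and $\rho(f,g,\lambda,u)=\sup_{r\ge 0}|f(r\wedge u)-g(\lambda(r)\wedge u)|$.

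First I would treat the model (unstable) case in which $f$ and $g$ coincide on the initial interval $[0,t]$ and differ only afterwards; these are the local unstable sets of the semi-flow. Writing $(\Theta_t f)(s)=f(s+t)$ and substituting $v=u+t$, a direct manipulation of the suprema shows that for any time-change $\lambda$ of the shifted pair,
\begin{equation}\label{eq:shift}
\int_0^\infty e^{-u}\,\rho(\Theta_t f,\Theta_t g,\lambda,u)\,du=e^{t}\int_t^\infty e^{-v}\,\rho(f,g,\tilde\lambda,v)\,dv,
\end{equation}
where $\tilde\lambda$ is obtained from $\lambda$ by prepending the identity on $[0,t]$. Since the slope of $\tilde\lambda$ on $[0,t]$ equals $1\in[e^{-\gamma(\lambda)},e^{\gamma(\lambda)}]$ and secants crossing $t$ interpolate between unit slope and the secants of $\lambda$, one gets $\gamma(\tilde\lambda)=\gamma(\lambda)$; and because $f,g$ agree on $[0,t]$ the integrand in \eqref{eq:shift} vanishes for $v\le t$, so the right-hand integral is exactly the full integral functional $I(\tilde\lambda)$ for the pair $(f,g)$. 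Thus under $\Theta_t$ the integral term is dilated by precisely $e^{t}$ while the $\gamma$-term is preserved.

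Combining these observations, in the regime where the infimum defining $d(f,g)$ is attained by the integral rather than by $\gamma$ — which is the case for paths that are close in the unstable direction, where the optimal time-change is near the identity and $\gamma$ is negligible — the $\vee$ in \eqref{eq:skm} is weight-dominated, so $\gamma(\tilde\lambda)\vee e^{t}I(\tilde\lambda)=e^{t}I(\tilde\lambda)$. Taking the infimum over such $\tilde\lambda$ in \eqref{eq:shift} and using that identity-on-$[0,t]$ changes form a subfamily of $\Lambda$ yields
\begin{equation}\label{eq:expand}
d(\Theta_t f,\Theta_t g)\ \ge\ e^{t}\,d(f,g),
\end{equation}
the asserted exponential-in-$t$ dilation. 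The same substitution gives the matching upper bound, so on the unstable sets $\Theta_t$ dilates Skorokhod distances by the exact factor $e^{t}$.

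The delicate point, and the step I expect to be the main obstacle, is the interaction of the infimum over time-changes with the exponential weight, together with the fact that expansion is purely a \emph{future} phenomenon. Since $\Theta_t$ forgets $w|_{[0,t]}$, any discrepancy between $f$ and $g$ carried by the initial interval contributes to $d(f,g)$ but is annihilated by the shift, so \eqref{eq:expand} can hold only in the unstable (future) direction; the flow direction, a pure time-translation of one path, is neutral, being measured by the $\gamma$-term that \eqref{eq:shift} leaves unchanged. The technical crux is therefore to show that for sufficiently close paths the Skorokhod distance is realised by a near-identity time-change whose residual discrepancy sits at strictly positive times, so that the $\vee$ in \eqref{eq:skm} is weight-dominated and the factor $e^{t}$ in \eqref{eq:shift} survives rather than being masked by $\gamma$. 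Quantifying this splitting of $d(f,g)$ into a contracted past part on $[0,t]$, a neutral flow part, and a future part dilated by $e^{t}$ is where the argument requires the most care, and it is what licenses the statement that $\Theta_t$ is expanding for the Skorokhod metric.
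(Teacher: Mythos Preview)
Your target is stronger than what the paper actually proves, and in fact your dilation inequality \eqref{eq:expand} is false for this metric. In the paper, ``expanding'' is made precise as the diameter bound
\[
d\big((w_1\,|^t\,w_2),\,(w_1\,|^t\,w_2')\big)\le e^{-t}
\]
for any two paths that agree on $[0,t]$. The proof is one line: take $\lambda=\id$, so $\gamma(\id)=0$ and the integrand in \eqref{eq:skm} vanishes for $u<t$ and is bounded by $1$ for $u\ge t$, giving $\int_t^\infty e^{-u}\,du=e^{-t}$. No control of the infimum over time-changes is needed.

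The genuine dilation $d(\Theta_tf,\Theta_tg)\ge e^{t}d(f,g)$ that you pursue cannot hold, even on the unstable sets. Take $w_1\equiv 0$, $w_2=\mathbf 1_{[1,\infty)}$ and $w_2'=\mathbf 1_{[1+\varepsilon,\infty)}$ for small $\varepsilon>0$. A time-change with $\lambda(1)=1+\varepsilon$ aligns the jumps, so $d(w_2,w_2')\le\gamma(\lambda)\approx\varepsilon$; here the infimum is realised by the $\gamma$-term. For the concatenated pair $f=(w_1|^tw_2)$, $g=(w_1|^tw_2')$, however, the identity already gives $d(f,g)\le e^{-(t+1)}$, and this is essentially optimal for large $t$: any $\lambda$ with $\gamma(\lambda)<e^{-(t+2)}$ satisfies $|\lambda(t{+}1)-(t{+}1)|<\varepsilon$, so the jumps remain misaligned and the integral term is at least $e^{-(t+2)}$. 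Thus $d(f,g)\asymp e^{-(t+1)}$, whence
\[
d(\Theta_tf,\Theta_tg)\;\asymp\;\varepsilon\;\ll\;e^{-1}\;\asymp\;e^{t}\,d(f,g)
\]
for $\varepsilon$ small and $t$ large. The mechanism is exactly the one you flagged as delicate and then set aside: for $(w_2,w_2')$ the optimal $\lambda$ is a nontrivial time-change with negligible integral, while for $(f,g)$ it is the identity with negligible $\gamma$. The $\vee$ in \eqref{eq:skm} switches branches under the shift, so the factor $e^{t}$ from \eqref{eq:shift} does not survive to the infimum. Reduce your goal to the paper's diameter bound; it follows immediately from $\lambda=\id$.
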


In \cite{GaCo}, \cite{Ga}, \cite{Ru0}, \cite{LNP1} and \cite{Ru}, the authors use an idea of Ruelle's as a guiding
principle to describe nonequilibrium stationary states in general. The purpose is to better understand a model for the chaotic hypothesis for a single (moving) particle
system held in a nonequilibrium stationary state. This model is described by properties of SBR probabilities for Axiom A (or Anosov) systems and entropy production rate (see also \cite{Da1}, \cite{MNS}, \cite{MN} and \cite{LNP2}). In this case, the potential is fixed as the Lyapunov exponent. The reason for such interest is that the real physical problem behaves, in many respects,
as if they were Anosov systems as far as their properties of physical
interest are concerned.
We wonder if our setting, where $V$ is general, also provides a sketch (as an alternative for the Anosov one) for the chaotic hypothesis.

The Appendix Sections \ref{A1} and \ref{A2} are technical and aim to analyze some integral kernels that naturally appear in our reasoning.

Some of our results are related to the ones in \cite{DoVa}, \cite{Ki1}, \cite{Ki2}, \cite{MNS}, \cite{MN},  \cite{LN}, \cite{Da1}, \cite{Gomes}, \cite{LT} and \cite{Morales}.

\section{The Model} \label{int}

Consider an infinitesimal generator $L$ of a Markov jump process with jump rate function $\lambda\equiv 1$ and a kernel $P(x,dy)$ that can be decomposed as $P(x,y)dy$, where the continuous function $P:[0,1]^2\to [0,1]$ satisfies for all $y\in[0,1]$

\begin{equation} \label{kj} \int P(x,y) \ dx =1. \end{equation}

This operator acts on periodic functions $f:[0,1]\to\R$ by
\begin{equation}\label{L}(Lf)(x)= \int \big[ f(y) -f(x) \big] P(x,y)dy.\end{equation}
Notice that $L(1)=0.$ We call $L$ the {\it a priori} infinitesimal generator.

A trivial example is when $P(x,y)=1$, for all $(x,y).$

We will denote by $L^*$ the dual of $L$ in $\mathcal{L}^2(dx)$, which  acts on functions $g:[0,1] \to \mathbb{R}$ by
\begin{equation}\label{Ls} (L^*g)(x)= \int P(y,x)g(y)dy - g(x). \end{equation}

Let $\theta$ be the invariant vector for $P$ on the left. In the subsection entitled ``Markov Chains with values on $S^1$'' of \cite[Section 3]{LMMS}, it is shown that, under H\"{o}lder assumption, there exists a unique $\theta$. Define $\mu(dx)=\theta(x)dx$ the probability measure with density $\theta$. This means that it satisfies
\begin{equation}\label{equacaodotheta} \int \theta(y)P(y,x)dy = \theta(x). \end{equation}
By the above, we get $L^*(\theta)=0$, which means that $\mu$ is invariant for the action of $L^*$.

When  $P(x,y)=1$, for all $(x,y)$,  we get that $\theta=1$ is a solution

Notice that $L$ and $L^*$ are bounded operators. Then, we can define the semigroup $e^{t L}$. For fixed $t\geq 0$, this semigroup is an integral operator, that is, there exists a kernel function $K_{t}:[0,1]\times[0,1] \to \mathbb{R}^+$ such that
\begin{equation}\label{functK}(e^{tL}f)(x) = \int K_{t}(x,y)f(y)dy+e^{-t}f(x).\end{equation}
The existence of this function $K_t$ is presented in Appendix section \ref{A1} along with some properties that it satisfies. 

For a continuous-time Markov process $\{X_t, t\geq 0\}$, the kernel $K_t$ does not integrate to 1, but if we consider $\hat K_t(x,dy) = K_t(x,y)dy + e^{-t}\delta_x(dy)$, then $\int \hat K(x,dy) = 1$ for all $x \in [0,1]$. Thus, $\hat K$ plays the same role as the transition function on the discrete-time case. Given an initial density function $\varphi_0$, denote by $\mathbb{P}$ the probability induced by this process in $\mathcal{D}$; we can measure a cylinder set $\mathcal{C}=\{X_0\in (a_0,b_0) , X_{t_1}\in (a_1,b_1), X_{t_2}\in (a_2,b_2)\}$ by
$$\mathbb{P}(\mathcal{C}) = \int_{a_0}^{b_0} \int_{a_1}^{b_1} \int_{a_2}^{b_2} \hat K_{t_2- t_1}(x_1,dx_2 ) \hat K_{t_1}(x_0,dx_1) \, \, \varphi_0(x_0) dx_0.$$ 

Now, let us see how we can compute $K_t$ with an example:
\begin{exam} \label{eex1} \textit{Take $P(x,y)= \cos[ (x - y) 2 \pi ]/2 + 1 $. This $P$ is symmetric and continuous on $[0,1]$. Since $\int \cos[(x-y) 2 \pi] dy = 0$, for any $x\in[0,1]$, we get that $\int P(x,y) d y=1$. In this case, the kernel function $K_t(x,y),t \geq 0$ can be explicitly expressed by
$$K_t(x,y) = 2\cos[2\pi(x-y)] (e^{-3t/4}-e^{-t}) +(1-e^{-t})$$
and the Lebesgue probability $d x$ is the unique invariant probability.}

First, we will calculate the $K_t$ expression. Note that
$$ \int \cos(2\pi(x-z))\cdot  \cos(2\pi(z-y))\, dz = \frac{1}{2}\cos(2\pi(x-y)).$$
Using induction, we show that $P^{n}(x,y)=\frac{ \cos(2\pi(x-y))}{2^{2n-1}} +1:$
\begin{eqnarray*}
P^{n+1}(x,y)&:=&\int P^n(x,z)P(z,y) dz\\
&=& \int \left(\frac{ \cos[2\pi(x-z)]}{2^{2n-1}} +1 \right)\left(\frac{ \cos[2\pi(z-y)]}{2} + 1\right)  dz  \\
&=& \frac{1}{2^{2n}} \int  \cos[2\pi(x-z)]\cdot  \cos[2\pi(z-y)]\, dz + 1\\
&=& \frac{ \cos[2\pi(x-y)]}{2^{2n+1}} + 1.
\end{eqnarray*}

By the general case, see Appendix section \ref{A1}, we know that
$$K_t(x,y)= \sum_{k=1}^{\infty} \frac{t^k}{k!} Q_k(x,y),$$
where
\begin{eqnarray*}
Q_k(x,y) &:=& \sum_{j=1}^{k} (-1)^{k-j}{k \choose j} P^j(x,y)\\
&=& \sum_{j=1}^{k} (-1)^{k-j}{k \choose j} \left(\frac{ \cos[2\pi(x-y)]}{2^{2j-1}} +1\right)\\
&=& 2\cos[2\pi(x-y)] \sum_{j=1}^{k} (-1)^{k-j}{k \choose j} \frac{1}{2^{2j}} +\sum_{j=1}^{k} (-1)^{k-j}{k \choose j}\\
&=& 2\cos[2\pi(x-y)] \left[(-1)^{k+1}+\left(-\frac{3}{4}\right)^k\right] +(-1)^{k+1}.
\end{eqnarray*}
The above gives us exactly the formula we want for $K_t(x,y)$.

Now, we turn ourselves to the second claim. The fact that $dx$ is invariant is an immediate consequence of symmetry: the function $1$ satisfies $L^*(1)=L(1)=0$. We need to go further to get uniqueness. 

A continuous function $f:[0,1] \to \mathbb{R}$ can be seen as a periodic function $f:\mathbb{R} \to \mathbb{R}$ with period $1$ so that we can employ Fourier Series. Write
$$ f(x)=\dfrac{a_0}{2} + \sum_{n=1}^{\infty} a_n \cos(2\pi n x) + \sum_{n=1}^{\infty} b_n \sin(2\pi n x),$$
with $\frac{a_0}{2}=\int f(x)dx$, $a_n=2\int f(x)\cos(2 \pi n x)dx \, $ and  $\, b_n=2\int f(x)\sin(2 \pi nx)dx$. Notice that $\cos(2\pi (x-y))=\cos(2\pi x)\cos(2\pi y) + \sin(2\pi x)\sin(2\pi y).$ Then
\begin{eqnarray*}
(Lf)(x) &=& \int f(y)dy + \frac{1}{2} \int f(y)\cos[2\pi (x-y)]dy - f(x)\\
&=& \dfrac{a_0}{2} +\frac{1}{2} \cos(2\pi x) \frac{a_1}{2} + \frac{1}{2} \sin(2\pi x) \frac{b_1}{2} -f(x).
\end{eqnarray*}

Therefore, $Lf=0$ if, and only if,
$$f(y) = \dfrac{a_0}{2} + \cos(2\pi y) \frac{a_1}{4} + \sin(2\pi y) \frac{b_1}{4}$$
and consequently $a_1=a_1/4$, $b_1=b_1/4$ and $a_n=b_n=0$, $\forall n \ge 2$. We conclude that $L^*f=Lf=0 \Leftrightarrow f \equiv \frac{a_0}{2}$, constant. It means that the only eigendensity of the operator $e^{tL}$ is that of Lebesgue measure $dx\, \, (\theta \equiv 1)$. 
\end{exam}

Consider $L$ as defined in equation \eqref{L}, let us assume that there exists a positive continuous density function $\theta: [0,1] \to \mathbb{R}$, such that, for any continuous function
$f:[0,1] \to \mathbb{R}$, we get
\begin{equation} \label{ktr} \int (Lf)(x) \theta(x)dx =0. \end{equation}
Moreover, as a consequence of the relation above, valid for any $f$, it is easy to see that $\theta$ is also a solution of equation \eqref{equacaodotheta}, which is unique under the H\"older assumption. Therefore, we can assume $L$ is such that the above-defined $\theta$ is unique.

\begin{defi} \label{kas}
Given $L$ defined on equation \eqref{L} and an initial density $\theta$ satisfying equation \eqref{ktr}, we get a continuous-time stationary Markov process $\{X_t, t \geq 0\}$, with values on $[0,1]$ (see \cite{BGL,Bo,Li}). This process defines a probability $\mathbb{P}$ on the  Skorokhod space $\mathcal{D}$. This probability $\mathbb{P}$ is invariant for the shift $\{\Theta_t$, $t \geq 0\}$, which acts on $\omega\in \mathcal{D}$ as $(\Theta_t (w))_ s=w_{s+t}$
\end{defi}

For this infinitesimal generator, the associated semigroup satisfies $e^{t\, L}(1)=1.$ Moreover, $e^{t\, L^*}(\theta)=\theta,$ where $L^*$ was given by equation \eqref{Ls} and $\theta$ satisfies equation \eqref{ktr}.

Now, we consider the continuous potential $V:[0,1] \to \mathbb{R}$ and introduce the operator $L+V$. In the same way, we define $L^*+V$. If $P(x,y)$ is symmetric, their spectral properties are the same. We also have a formula for the homogeneous semigroup
\begin{equation}\label{expLV}\left(e^{ t \, (L + V)}f\right)(x)=\mathbb E_x\left[e^{\int_0^tV(X_r)dr}f(X_t)\right],\end{equation}
where $\{X_t$, $t \geq 0\}$, is the Markov process with infinitesimal generator $L$. Notice that this semigroup is not Markovian.

Similarly to $e^{tL}$, this semigroup is also an integral operator, that is, there exists a kernel function $K_t^V:[0,1]\times[0,1] \to \mathbb{R}^{+}$ such that
\begin{equation}\label{functKV}\left(e^{ t \, (L + V)}f\right) (x) = \int K_t^V(x,y)f(y) dy + e^{-t} e^{tV(x)}f(x).\end{equation}
The properties of $K_t^V$ are the ones presented in Appendix Section \ref{A2} if we consider $\lambda\equiv 1$.


\section{Ruelle Operator and the Gibbs Markov process} \label{SRu1} 

In this section, we will introduce the Ruelle operator (which was considered in similar cases in \cite{BEL}) and use a kind of normalization procedure to get the Gibbs Markov process and its induced Gibbs probability on $\mathcal{D}$. 

\begin{defi}[Ruelle Operator] \label{Ruru} Consider, for a fixed $t\geq 0$, the continuous-time Ruelle operator $\mathbb{L}^t_V$, associated with $V$, that acts on continuous functions $\varphi: [0,1]\to\R$ as
$$ (\mathbb{L}^t_V \varphi)(x)= \E_x\left[e^{\int_0^t V(X_r)dr}\varphi(X_t)\right]=\left(e^{t(L+V)}\phi\right)(x).$$
\end{defi}

For a symmetrical $L$, as the Feynman-Kac formulas for natural and reverse time processes coincide, this Ruelle operator can be seen as the continuous-time version of the classical Ruelle operator (discrete case). Figure \ref{fig1} depicts this statement. The left approach is more suitable for the Feymann-Kac formula, while the right one can be easily related to the classical discrete-time Ruelle operator for the $n$-coordinate shift $\sigma^n:[0,1]^\N\to [0,1]^\N$, with $y$ being the initial value of the shifted path. 
\begin{figure}[h]
	\centering
	\hspace{-10pt} {\includegraphics[scale=0.8]{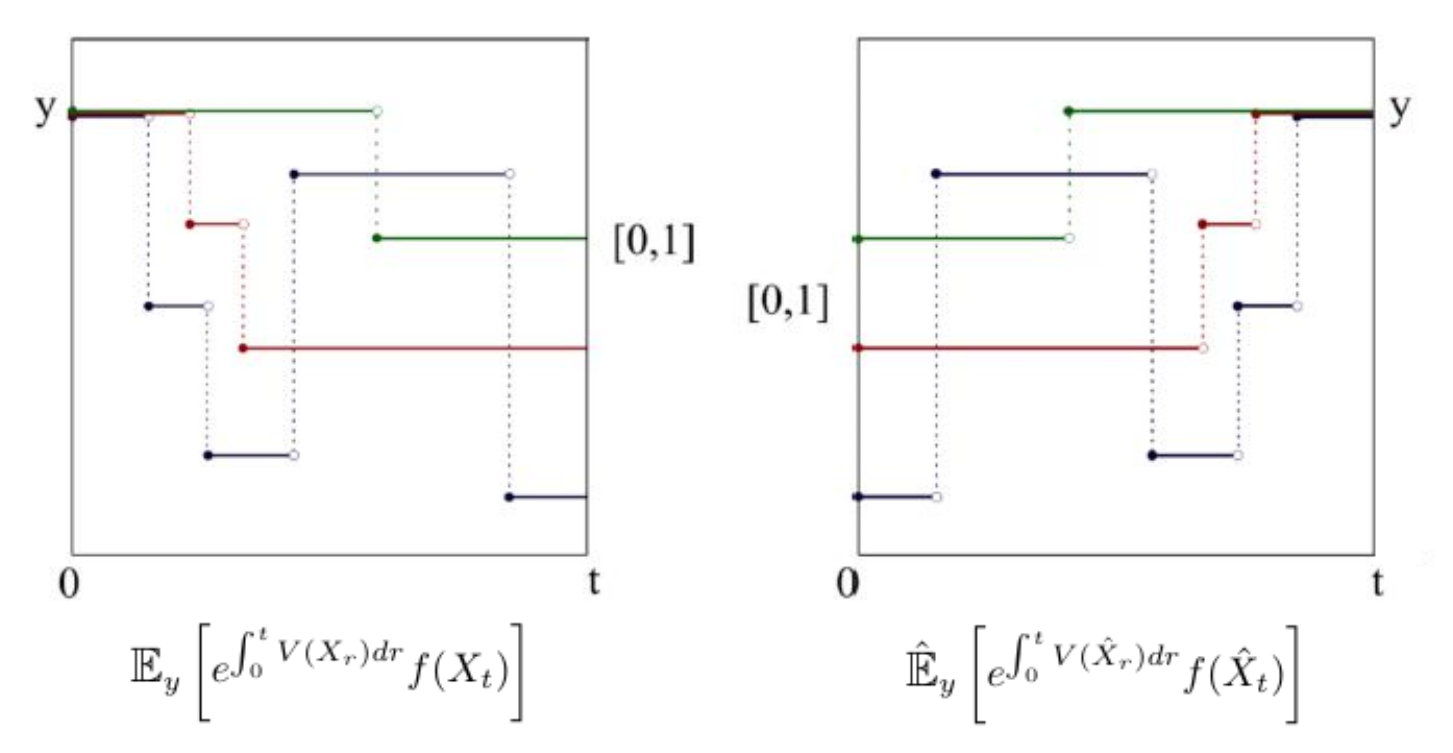}}
\caption{If $L$ is symmetric, we can use the Ruelle Operator at natural or reversal time.}
\label{fig1}
\end{figure} 

According to our notation, the continuous-time Ruelle operators $\mathbb{L}^t_V$, $t \geq 0$, are a family of linear operators indexed by $t$.
\begin{defi} \label{Rururu} Fix $V:[0,1] \to \mathbb{R}$. We say that the family of Ruelle operators $\mathbb{L}^t_V, t \geq 0,$ is normalized if $\mathbb{L}^t_V 1=1$, for all $t\geq 0.$
\end{defi}

If the potential $V\equiv 0$, for any $t\geq 0$, the Ruelle operator is $\mathbb{L}_0^t = e^{tL}$. In this case, the family of Ruelle operators is normalized. From now to the end of this section, we will study non-normalized Ruelle operators to associate them with a normalized Gibbs operator.

\begin{defi} \label{Rural} We say that $f:[0,1] \to \mathbb{R}$ is an eigenfunction on the right of the Ruelle operator $\mathbb{L}^t_V$, $t \geq 0$,
associated with the eigenvalue $\lambda\in \mathbb{R}$, if for all $t\geq 0$,
$$ \mathbb{L}^t_V f = e^{\lambda\, t} f.$$
Similarly, we say that $h$ is an eigenfunction on the left, meaning an eigenfunction of $(\mathbb{L}^t_V)^*: L^2([0,1],dx) \to  L^2([0,1],dx)$, if 
$$h\,\mathbb{L}^t_V := (\mathbb{L}^t_V)^*\, h = e^{\lambda\, t} h.$$
\end{defi}

To find these eigenfunctions, we have to analyze the properties of the operator $L+V$ and $L^* + V.$

Assume that $f,h$ are positive functions such that
\begin{equation} \label{rtyo} (L+V) (f)=\lambda f\hspace{5pt}\textrm{ and }\hspace{5pt}(L^{*}+V) (h)=\lambda h,\end{equation}
that is, $f,h:[0,1] \to \mathbb{R}^+$ are eigenfunctions of $L+V$ and $L^{*}+V$, respectively, associated with the same eigenvalue $\lambda\in \mathbb{R}$. Then, $e^{t(L + V)}f = e^{\lambda t} f,$ which makes $f$ an eigenfunction for the associated Ruelle operator. In addition, $e^{t(L^* + V)}h = e^{\lambda t} h$. We say that such $\lambda$ (positive or negative) is the main eigenvalue.

Notice that, by linearity, we have a whole class of functions that satisfies equation \eqref{rtyo}. It is natural to assume the normalization condition $\int h(x) dx =1$, so we can see $h$ as a density. Let us take the specific $f$ that satisfies
$\int f (x) h (x) d x =1.$
In this case, $\pi(x) = f (x) h (x) $ is a density on $[0,1]$. There is no eigenfunction for \eqref{rtyo}  when $ P=1$ and $V$ is not constant. 

In the following, we will assume a solution exists for equation \eqref{rtyo}. Comparing with \cite[pages 106 and 111]{Str}, we can see that, when the state space is discrete, we always  get that solutions via the Perron-Frobenius theory.

\begin{hyp}\label{assumPF} Assume that there exists an eigenvalue $\lambda\in \mathbb{R}$ and two functions $\ell:[0,1] \to \mathbb{R}^+$ and $r:[0,1] \to \mathbb{R}^+$ of H\"older class, such that,
\begin{eqnarray*}
(L+V) r=\lambda r&\textrm{ and }&\ell \,(L+V) =\lambda \ell.
\end{eqnarray*}
\end{hyp}

There are plenty of examples of pairs $L$ and $V$ for such ones the above condition is satisfied. To exemplify that, we will use a continuous function $g:[0,1]\to\R^{+}$, satisfying $\int g(x)dx=1$, to define $P(x,y)$, for $x,y\in[0,1]$, by
$$P(x,y)=\left\{\begin{array}{ll}g(x+y),& \textrm{ if } (x+y)<1,\\ g(x+y-1),& \textrm{ if } (x+y)\geq 1.\end{array}\right.$$
This $P$ is a symmetric kernel and the corresponding invariant density $\theta$, satisfying equation \eqref{equacaodotheta}, is equal to 1. Therefore, the $g$ we choose defines $L$ via $P$. Notice that $L^{*}=L$, then we just need to find $f$ such that $(L+V)(f)=\lambda f$, because, in this case, we have $(L^{*}+V)h=\lambda h$ for $h=f$.

\begin{theo}\label{theo3} Consider $g$ as a restriction of a polynomial of degree 2 to $[0,1]$, with $g(0)=g(1)\geq 0$. Assume $L$ is defined via $P$ using the polynomial $g$, as discussed above. Defining, for any $b\in\R$, a polynomial $V(x)=b+[1-g(0)]x(1-x)$, there exists $\lambda\in\R$ and $f$ a polynomial of degree 2, with $f(0)=f(1)$, satisfying
\begin{equation}\label{exPF}
(L+V)(f)(x)=\int [f(y)-f(x)]P(x,y)dy + V(x)f(x) = \lambda f(x).
\end{equation}
Moreover, there exists a solution $f$ which is positive on $[0,1]$.
\end{theo}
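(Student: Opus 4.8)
The plan is to reduce the functional equation \eqref{exPF} to a finite-dimensional linear-algebra problem by exploiting the fact that the relevant function spaces are closed under the operator $L+V$. First I would work in the space $\mathcal{P}_2$ of polynomials of degree at most two restricted to $[0,1]$, and check that $L$ maps $\mathcal{P}_2$ into itself: since $g$ is a degree-two polynomial, for a monomial $y^j$ the integral $\int_0^1 P(x,y)\,y^j\,dy$ is a polynomial in $x$ of degree at most two (one does a change of variable on the two pieces $x+y<1$ and $x+y\geq 1$ defining $P$, and uses that $g$ is a genuine polynomial so the split point contributes only boundary terms), and likewise $\int_0^1 P(x,y)\,dy=1$. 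Then multiplication by the degree-two potential $V$ would in general raise the degree to four; the crucial algebraic coincidence is that the specific choice $V(x)=b+[1-g(0)]x(1-x)$ together with the normalization $\int P(x,y)dy=1$ arranges cancellation of the degree-three and degree-four terms in $(L+V)f$ when $f\in\mathcal{P}_2$ has $f(0)=f(1)$. Concretely I would impose $f(x)=c_0+c_1 x(1-x)$ (which automatically gives $f(0)=f(1)=c_0$ and spans, together with constants, the relevant two-parameter family), plug into \eqref{exPF}, expand both sides as polynomials, and collect coefficients.

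The second step is the bookkeeping: after substituting $f(x)=c_0+c_1 x(1-x)$ one obtains, on the left side, a polynomial whose coefficients are explicit affine functions of $b$, $g(0)$, and the coefficients of $g$, linear in $(c_0,c_1)$; setting it equal to $\lambda(c_0+c_1 x(1-x))$ yields a small system. The degree-$\geq 3$ coefficients must vanish identically — this is where one checks the design of $V$ does its job — and the remaining constant and $x(1-x)$ coefficients give two scalar equations, i.e. an eigenvalue problem $M\binom{c_0}{c_1}=\lambda\binom{c_0}{c_1}$ for an explicit $2\times 2$ matrix $M=M(b,g)$. Because $V=b+(\dots)$ contains the free additive constant $b$, the matrix $M$ is of the form $bI+M_0$, so its eigenvalues are $b$ plus the eigenvalues of $M_0$, and one simply reads off $\lambda$ from the characteristic polynomial of $M_0$; the associated eigenvector gives $(c_0,c_1)$, hence $f$. (One should note $L(1)=0$ forces one structural relation that makes the constant row of $M_0$ particularly simple, reflecting that $\mathbb L_V^t 1 = e^{bt}\cdot(\text{something})$ only when $c_1=0$.)

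For the final clause — existence of a solution $f$ that is positive on $[0,1]$ — I would argue as follows. Among the (at most two) eigenpairs produced above, pick the eigenvector $(c_0,c_1)$; the corresponding $f(x)=c_0+c_1x(1-x)$ is a parabola, so on $[0,1]$ it attains its extrema either at the endpoints (value $c_0$) or at $x=1/2$ (value $c_0+c_1/4$). Thus $f>0$ on $[0,1]$ iff $c_0>0$ and $c_0+c_1/4>0$, i.e. iff $c_0>0$ and $c_0>-c_1/4$. Since eigenvectors are defined up to sign and scale, one can normalize $c_0>0$; it then remains to check $4c_0+c_1>0$ for at least one of the eigenvectors of $M_0$. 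Here I would invoke a Perron–Frobenius-type argument: the operator $L+V$, or rather $e^{t(L+V)}=\mathbb L_V^t$, is positivity-improving on $C([0,1])$ for $t$ large (its kernel $K_t^V$ from \eqref{functKV} is strictly positive, being built from $P>0$ and the exponential weight), so its leading eigenfunction is strictly positive; that leading eigenfunction, being an eigenfunction in $\mathcal P_2$ by the computation above, must be our $f$ for the appropriate eigenvalue, and strict positivity is automatic. Alternatively, and more elementarily, one checks the sign condition $4c_0+c_1>0$ directly from the explicit entries of $M_0$ and the constraint $g(0)=g(1)\geq 0$.

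The main obstacle I anticipate is Step 1–2: verifying that the degree-three and degree-four terms in $(L+V)f$ genuinely cancel for the prescribed $V$. This is not a priori obvious — it is precisely the reason the theorem singles out that exact form $V(x)=b+[1-g(0)]x(1-x)$ — and it requires carefully evaluating the piecewise integrals $\int_0^1 P(x,y)y^j\,dy$ for $j=1,2$, keeping track of the contributions of the wrap-around in the definition of $P$. Once that cancellation is in hand, the remaining eigenvalue extraction and the positivity check are routine.
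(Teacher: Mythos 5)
Your overall plan is the paper's proof: substitute the ansatz $f(x)=c_0+c_1x(1-x)$, check that the cubic and quartic terms of $(L+V)f$ cancel, match the remaining coefficients to get a two-dimensional eigenvalue problem of the form $bI+M_0$, read off $\lambda$ (the paper gets $\lambda=b-\tfrac12\pm\tfrac{1}{30}\sqrt{405-210a_0+30a_0^2}$, with $a_0=g(0)$, after using $g(0)=g(1)$ and $\int g=1$ to write $g(x)=a_0+6(1-a_0)x(1-x)$), and then verify positivity of the resulting parabola by an explicit sign check on the eigenvector.

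Two points in your write-up need repair, though neither changes the strategy. First, your opening claim that $L$ maps $\mathcal{P}_2$ into itself is false: with $P(x,y)=g(x+y)$ (mod $1$) and $f=c_0+c_1x(1-x)$, the integral term is genuinely quartic (the paper's $p(x)$ contains $2(1-a_0)c_1x^3-(1-a_0)c_1x^4$), and the cancellation of degrees $3$ and $4$ takes place between $Lf$ and $Vf$; if $Lf$ really were in $\mathcal{P}_2$, the quartic part of $Vf$ alone could only vanish when $c_1=0$ or $g(0)=1$, so your first paragraph is internally inconsistent. You do correctly identify this cancellation as the step to be verified, so the plan survives, but the preliminary structural claim should be dropped. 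Second, your primary positivity argument via Perron--Frobenius is not reliable in this setting: $e^{t(L+V)}$ is a multiplication operator plus an integral operator and is not compact, the paper itself warns that for a non-discrete state space a positive eigenfunction of the continuous-time Ruelle operator need not exist, and even if a leading eigenfunction existed there is no reason it should lie in $\mathcal{P}_2$ and hence coincide with one of the two polynomial eigenfunctions you produced. The elementary alternative you mention in one line is what the paper actually does, and it is not vacuous: writing $c_0=\frac{b-1-\lambda}{a_0-1}c_1$, one checks that for $0<a_0<1$ the choice $c_1>0$ gives $f>0$, while for $a_0\geq 1$ the choice $c_1<0$ does; this check uses that $g\geq 0$ on $[0,1]$ (equivalently $a_0\leq 3$) --- for values such as $a_0=4$, which are excluded only because $g$ would fail to be a density, neither eigenvector has a sign on $[0,1]$. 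So the positivity clause genuinely requires the explicit verification, not a soft spectral argument.
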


\begin{proof}
Write $g(x)=a_0+a_1x+a_2x^2$, for some $a_0,a_1,a_2\in\R$ with $a_0>0$ and $a_2\neq 0$. The restriction $g(0)=g(1)$ implies that $a_2=-a_1$, while the integral condition gives us that $a_1=6(1-a_0)$. Considering both, we have $g(x)=a_0+6(1-a_0)x(1-x)$ with $a_0>0$ and $a_0\neq 1$. 

In the same way, the polynomial $f$ is of the form $f(x)=c_0+c_1x(1-x)$, for some $c_0,c_1\in\R$. Using the definition of $P$, we can compute the integral term of equation \eqref{exPF} as
\begin{eqnarray*}p(x)&:=&\int_0^{1-x} [f(y)-f(x)]g(x+y)dy + \int_{1-x}^1 [f(y)-f(x)]g(x+y-1)dy
\\&=&\frac{1}{30}(6-a_0)c_1-c_1x+a_0c_1x^2+2(1-a_0)c_1x^3-(1-a_0)c_1x^4.
\end{eqnarray*}

Considering $g(0)=a_0$ on the definition of $V$, the expression $p(x)+V(x)f(x)$ of the left hand side of equation \eqref{exPF} turns into
$$\left[bc_0+\frac{c_1}{5}-\frac{a_0c_1}{30}\right]+\left[(1-a_0)c_0-(1-b)c_1\right]x+\left[(-1+a_0)c_0+(1-b)c_1\right]x^2.$$
This expression needs to be equal to $\lambda f(x)$, also a polynomial of degree 2, for some $\lambda\in\R$. This means that these polynomials should have the same coefficients, which gives us three equations:
$$\left\{\begin{array}{l}
bc_0+\frac{c_1}{5}-\frac{a_0c_1}{30} = \lambda c_0, \\
(1-a_0)c_0-(1-b)c_1 = \lambda c_1, \\
(-1+a_0)c_0+(1-b)c_1 = - \lambda c_1.
\end{array}\right.$$

First, notice that the last two equations give us the same condition. As $a_0\neq 1$, they give us that $c_0=\left(\frac{b-1-\lambda}{a_0-1}\right)c_1.$ Substituting on the first one, we get
$$c_1\left(\left[\frac{30b^2-30b-6+7a_0-a_0^2}{30(a_0-1)}\right]-\left[\frac{2b-1}{a_0-1}\right]\lambda+\left[\frac{1}{a_0-1}\right]\lambda^2\right)=0.$$
The general solution of this is $\lambda=\frac{1}{30}\left(-15\pm\sqrt{405-210 a_0 + 30a_0^2}+30b\right),$ which gives
\begin{equation}\label{second}f(x)=c_1\left(\frac{b-1-\lambda}{a_0-1}+x(1-x)\right).\end{equation}

The general eigenfunction $f$ presented above can, sometimes, assume negative values on the interval $[0,1]$. The Assumption \ref{assumPF} asks positivity since we need that in our reasoning (see equation \eqref{gamma}), but this is not a problem in this case because we have positivity for $0<a_0<1$ if we take $c_1>0$ and for $a_0\geq 1$ if $c_1<0$. 
\end{proof}

\begin{remm} The same may not be true in other cases. For instance, considering $g$ and $V$ as polynomials of degree 3, we were able to get a positive polynomial $f:[0,1]\to\R^{+}$ of degree 3, but the next example shows that it is impossible to get a polynomial solution on $S^1$, considering the periodic boundary condition $0\equiv 1$. In the search for suitable $V$ and $f$, we are not able to get solutions satisfying the property that $g$ is strictly positive. This is not very intuitive since, for larger degrees, we have more free variables to work. In fact, in the degree $n$ case, we have $3n+4$ coefficients ($n+1$ from each polynomial and one from $\lambda$) to cancel $2n+1$ coefficients of $p(x)+(V(x)-\lambda)f(x)$, the constraints, which means there are left three coefficients for periodicity, one for $\int g(x)dx=1$ and at least $n-1$ to adjust positivity.
\end{remm}

\begin{exam}  \label{tree} Consider periodic polynomials $f,g,V:[0,1]\to\R$ of degree 3 as
$$g(x)=a_0+a_1x-3(-4+4a_0+a_1)x^2+(-a_1+3(-4+4a_0+a_1))x^3,$$
$$V(x)=b_0+b_1x+b_2x^2+(-b_1-b_2)x^3,$$
$$f(x)=c_0+c_1x+c_2x^2+(-c_1-c_2)x^3$$
and define a polynomial of degree 6 by
$$K(x):=\int_0^1 P(x,y)f(y)dy+V(x)f(x)-(1-\lambda)f(x).$$

Suppose that $c_1,c_2\in\R$ are such that:
\begin{enumerate}[1)]
\item $c_1\neq 0$;
\item $c_1+c_2\neq 0$;
\item $2c_1+c_2\neq 0$; 
\item $3c_1+c_2\neq 0$; 
\item $54c_1^2+39c_1c_2+8c_2^2\neq 0$;
\item $91368 c_1^6 + 186948c_1^5c_2 + 159318c_1^4c_2^2 + 71367c_1^3c_2^3 + 17228c_1^2c_2^4 + 1984c_1c_2^5 + 64c_2^6\neq 0$.
\end{enumerate}
In this case, solving $K(x)=0$, we find expressions\footnote{We are not showing the exact expressions here due to its complexity, but it is possible to get them using the \textit{Mathematica} software by nullifying each coefficient of $K$ from the highest exponent to the smallest one.} for $b_2,b_1,a_1,c_0,\lambda,a_0$ as functions of $b_0,c_1,c_2$. Thus, we get a whole class of polynomials $f,g,V$ that solves $K(x)=0$, for all $x$, but none of these solutions satisfies that $g$ is strictly positive on $[0,1]$. In order to see that, let us analyze the properties of a generic polynomial $G:\R\to\R$ of degree 3 satisfying $G(0)=G(1)$ and $\int_0^1 G(x)dx=1$.

If we write $G$ as
$$G(x)=\frac{1}{12}(12+2g_2+3g_3)+(-g_2-g_3)x+g_2x^2+g_3x^3,$$
the expressions for the two critical points of $G$ are
$$X_1=\frac{g_2-\sqrt{g_2^2+3g_2g_3+3g_3^2}}{3g_3}\hspace{5pt}\textrm{ and }\hspace{5pt}X_2=\frac{g_2+\sqrt{g_2^2+3g_2g_3+3g_3^2}}{3g_3}.$$
Looking closely at these critical points, we see that $X_1$ is a local maximum and $X_2$ is a local minimum for $G$. Furthermore, since $G(0)=G(1)$, there is at least one critical point of $G$ on $[0,1]$. Then, we can divide the analysis into three cases:
\begin{enumerate}[(i)]
\item $X_1\in[0,1]$ and $X_2\not\in[0,1]$;
\item $X_2\in[0,1]$ and $X_1\not\in[0,1]$;
\item $X_1,X_2\in[0,1]$.
\end{enumerate}
Visually, we can see these cases as in Figure \ref{G-graficos}

\begin{figure}[h] 
\center
\includegraphics[width=\textwidth]{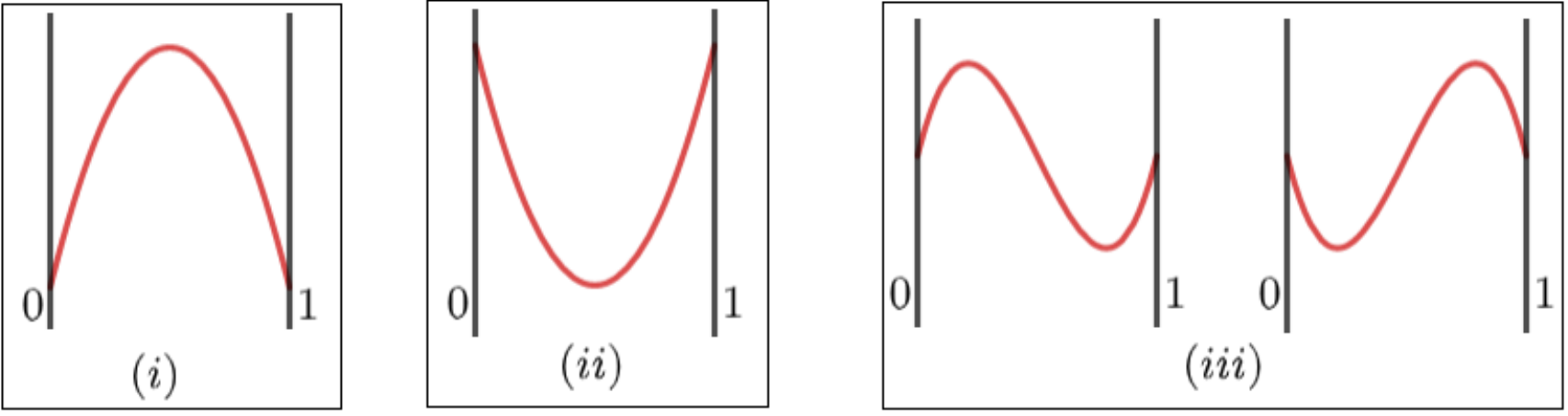}
\caption{Graphics}
\label{G-graficos}
\end{figure}

\noindent Notice that these conditions are uniquely defined by the sign of the derivatives of $G$ on $x=0$ and $x=1$. Moreover, the positivity is given by the absolute minimum on $[0,1]$, which is $G(0)=G(1)$ on \textit{(i)} and $G(X_2)$ on \textit{(ii)} and \textit{(iii)}. 

Finally, we consider $g_2$ and $g_3$ as $a_2$ and $a_3$, the respective coefficients of $g$ that solves $K(0)=0$. Using the free variables $c_1,c_2\in\R$, in all three cases, we can reduce $g(x)\geq 0$, for all $x\in[0,1]$, into 
$$\left\{\begin{array}{ll}
c_2\neq 0,&\textrm{if $c_1=0$},\\
c_2=-2c_1,&\textrm{if $c_1\neq 0$}.
\end{array}\right.$$
In both cases, there is no solution, since we have a clash with conditions 1 and 3 that we initially set for $c_1,c_2$.
\end{exam}

Observe that a function $f$ obtained from Assumption \ref{assumPF} defines a whole set of functions $\{\alpha f, \alpha\in\R^{+}\}$ that also satisfies the same condition. In Example \ref{tree}, this is very clear when we look at equation \eqref{second}. One can use this subspace to get specific functions satisfying some conditions. For instance, for a fixed $V$, we take $\ell_V$, $r_V$ and $\lambda_V$ the ones from Assumption \ref{assumPF} that satisfies the normalization conditions
\begin{equation}\label{rrty}
\int \ell_V (x) dx =1\hspace{5pt}\textrm{ and }\hspace{5pt}\int r_V(x) \ell_V(x) dx=1.
\end{equation}
An equation for the right eigenfunction $r_V$ is
\begin{equation} \label{pot}
\int P(x,z) r_V(z) dz  - (1 + \lambda_V  - V(x) )r_V(x) =0,
\end{equation}
for any $x$. On the other hand, the left eigenfunction $\ell_V$ satisfies
\begin{equation} \label{pot2}
\int \ell_V(z)   P(z,x)  dz  - (1 + \lambda_V - V(x) ) \ell_V(x) =0.
\end{equation}
For all $x,y\in [0,1]$, $t\geq 0$ and $f\in C_b([0,1])$, define
\begin{equation}\label{gamma}
\gamma_V(x)=1+\lambda_V -V(x),\qquad Q_V(x,y)=\frac{P(x,y)r_V(y)}{r_V(x)\gamma_V(x)},
\end{equation}
\begin{equation}\label{gerador_gibs}(\mathcal{L}_Vf)(x)=\gamma_V(x)\int[f(y)-f(x)]Q_V(x,y)\,dy\end{equation}
and
\begin{equation}\label{semigrupo_gibs}(\mathcal{P}^V_tf)(x)=\frac{e^{t(L+V)}(r_V f)(x)}{e^{\lambda_V \,t}r_V(x)}.\end{equation}

\begin{remm} One can also write
$$(\mathcal{P}^V_t f) (x) = \frac{1}{e^{\lambda_V t} r_V( x)} \mathbb{E}_x\left[ e^{ \int_0^t V( X_s )\, ds} r_V(X_t) f(X_t)\right].$$
\end{remm}

\begin{lemma} \label{eeste}
The operator $\mathcal{P}^V_t$ is the semigroup associated with the infinitesimal generator $\mathcal{L}_V$, that is,
$$\lim_{t\to 0}\frac{(\mathcal{P}^V_t f)(x)-f(x)}{t}=(\mathcal{L}_V f)(x).$$
\end{lemma}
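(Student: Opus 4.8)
The plan is to reduce everything to the elementary theory of uniformly continuous semigroups generated by bounded operators. Since $L$ from \eqref{L} is a bounded operator on $C_b([0,1])$ (because $P$ is continuous, hence bounded, on the compact square $[0,1]^2$) and multiplication by the continuous function $V$ is bounded as well, the operator $L+V$ is bounded, so $\{e^{t(L+V)}\}_{t\ge 0}$ is norm-continuous and $t\mapsto e^{t(L+V)}g$ is differentiable in the sup-norm with $\frac{d}{dt}\,e^{t(L+V)}g=(L+V)e^{t(L+V)}g$; in particular its derivative at $t=0$ equals $(L+V)g$. I would then use the closed form \eqref{semigrupo_gibs}, rewritten as $(\mathcal{P}^V_t f)(x)=e^{-\lambda_V t}\,r_V(x)^{-1}\big(e^{t(L+V)}(r_V f)\big)(x)$, and observe that $r_V$, being continuous and strictly positive on the compact interval $[0,1]$ by Assumption \ref{assumPF}, is bounded below, so division by $r_V(x)$ is harmless. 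Differentiating the scalar map $t\mapsto e^{-\lambda_V t}\big(e^{t(L+V)}(r_V f)\big)(x)$ at $t=0$ by the product rule, and using $(\mathcal{P}^V_0 f)(x)=f(x)$, yields
\[
\lim_{t\to 0}\frac{(\mathcal{P}^V_t f)(x)-f(x)}{t}=\frac{1}{r_V(x)}\Big[\big((L+V)(r_V f)\big)(x)-\lambda_V\, r_V(x)\, f(x)\Big],
\]
the convergence being in fact uniform in $x$ because only bounded operators and the bounded evaluation functionals $g\mapsto g(x)$ are involved.

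It then remains to identify the right-hand side with $(\mathcal{L}_V f)(x)$, which is a purely algebraic matter. First, a Leibniz-type computation using only \eqref{L} gives, for every $x$,
\[
\big(L(r_V f)\big)(x)=\int r_V(y)\,[f(y)-f(x)]\,P(x,y)\,dy+f(x)\,(L r_V)(x),
\]
as one checks by expanding both sides (no hypothesis on $\int P(x,y)\,dy$ is needed here). Adding $V(x)r_V(x)f(x)$ to both sides and invoking $(L+V)r_V=\lambda_V r_V$ from Assumption \ref{assumPF}, one gets $\big((L+V)(r_V f)\big)(x)=\int r_V(y)[f(y)-f(x)]P(x,y)\,dy+\lambda_V r_V(x)f(x)$, so the bracket above, divided by $r_V(x)$, equals $r_V(x)^{-1}\int r_V(y)[f(y)-f(x)]P(x,y)\,dy$. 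On the other hand, substituting $Q_V$ from \eqref{gamma} into \eqref{gerador_gibs} makes the factor $\gamma_V(x)$ cancel and leaves exactly $(\mathcal{L}_V f)(x)=r_V(x)^{-1}\int r_V(y)[f(y)-f(x)]P(x,y)\,dy$. The two expressions coincide, which proves the lemma. I would also add one line noting that $\mathcal{P}^V_t\circ\mathcal{P}^V_s=\mathcal{P}^V_{t+s}$ follows at once from \eqref{semigrupo_gibs} and the semigroup property of $e^{t(L+V)}$, the scalar factor $e^{-\lambda_V t}$ and the multiplier $r_V$ telescoping, so that $\mathcal{L}_V$ is indeed \emph{the} generator of the semigroup $\{\mathcal{P}^V_t\}_{t\ge 0}$.

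I do not expect a genuine obstacle. The two places deserving a word of care are: (i) the differentiation of the semigroup and the interchange of $\lim_{t\to 0}$ with the fixed evaluation at $x$ — this is legitimate precisely because $L+V$ is a bounded operator, so the semigroup is differentiable in operator norm rather than merely strongly; and (ii) the bookkeeping in the Leibniz identity together with the cancellation of the $\gamma_V$ factor, which is where the specific normalization built into \eqref{gamma} is used and the only step that would fail for an arbitrary positive right eigenfunction. Everything else is routine.
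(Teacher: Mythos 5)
Your proof is correct and takes essentially the same route as the paper's: differentiate \eqref{semigrupo_gibs} at $t=0$ to obtain $\tfrac{1}{r_V}(L+V)(r_V f)-\lambda_V f$, and then identify this expression with $(\mathcal{L}_V f)$ using the eigen-equation for $r_V$. The differences are cosmetic: you justify the $t\to 0$ limit via norm-differentiability of $e^{t(L+V)}$ (which the paper takes for granted), and you arrange the algebra through a Leibniz identity together with the cancellation of $\gamma_V$ in \eqref{gerador_gibs}, whereas the paper reaches the same cancellation by invoking \eqref{pot} and \eqref{orp}.
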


\begin{proof}
We can rewrite
\begin{eqnarray*}
\frac{(\mathcal{P}^V_t f)(x)-f(x)}{t} &=& \frac{1}{e^{\lambda_V t}r_V(x)}\left(\frac{e^{t(L+V)}(r_V f)(x)-r_V(x)f(x)}{t}\right)
\\&&+f(x)\left(\frac{e^{-\lambda_V \,t}-1}{t}\right).
\end{eqnarray*}

Taking limit as $t\to 0$, we get
\begin{eqnarray*}
&&\frac{1}{r_V(x)}(L+V)(r_V f)(x)-\lambda_Vf(x)\\*
&=&\frac{1}{r_V(x)}\left[\int P(x,y)r_V(y)f(y)dy+(V(x)-1)r_V(x)f(x)\right]-\lambda_Vf(x)\\
&=&\frac{1}{r_V(x)}\int P(x,y)r_V(y)f(y)dy-\gamma_V(x)f(x)\\
&=&\gamma_V(x)\int \left(\frac{P(x,y)r_V(y)}{r_V(x)\gamma_V(x)}\right)f(y)dy-\gamma_V(x)f(x).
\end{eqnarray*}

Using equation \eqref{pot}, we have
\begin{equation}\label{orp}
\int Q_V(x,y) dy =\int \frac{P(x,y)r_V(y)}{r_V(x)\gamma_V(x)} dy = 1.
\end{equation}

Then,
$$\lim_{t\to 0}\frac{(\mathcal{P}^V_t f)(x)-f(x)}{t}=\gamma_V(x)\int [f(y)-f(x)]Q_V(x,y)dy=(\mathcal{L}_V f)(x).$$
\end{proof}

Notice that the semigroup $\mathcal{P}^V_t$, $t \geq 0,$  is normalized. Furthermore, equation \eqref{orp} defines a jump process (with the kernel given by $Q_V$ and jump rate function $\gamma_V$).

\begin{defi}[Gibbs Markov process]\label{GiGi} We call Gibbs Markov process associated with the potential $V$ (and the a priori infinitesimal generator $L$) the continuous-time Markov jump process generated by $\mathcal{L}_V$.
\end{defi}

Now, we want to prove that the invariant density for the Gibbs process is $\pi_V=\ell_V r_V$, where the normalization conditions given by equation \eqref{rrty} are assumed to be satisfied. We need to use the dual operator $\mathcal{L}^*_V$ to do this.

\begin{lemma}\label{lemdual}
The dual of the operator $\mathcal{L}_V$ in $L^2([0,1],dx)$ is the operator
\begin{equation}\label{trw}
(\mathcal{L}_V^*g)(x)= \int \gamma_V(y) g(y) Q_V(y,x) dy - \gamma_V(x) g(x).
\end{equation}
\end{lemma}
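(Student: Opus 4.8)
The plan is to compute the dual directly from the definition of $\mathcal{L}_V$ by pairing $\mathcal{L}_V f$ against an arbitrary test function $g$ in $L^2([0,1],dx)$ and moving all the operations onto $g$. Concretely, starting from equation \eqref{gerador_gibs}, I would write
$$\int (\mathcal{L}_V f)(x)\, g(x)\, dx = \int\!\!\int \gamma_V(x)\,[f(y)-f(x)]\,Q_V(x,y)\,g(x)\,dy\,dx,$$
split the double integral into the two terms coming from $f(y)$ and from $-f(x)$, and in the first term apply Fubini's theorem to interchange the order of integration. In that first term one then relabels the variables so that $f$ appears evaluated at the outer integration variable: this produces $\int f(x)\big(\int \gamma_V(y)\,g(y)\,Q_V(y,x)\,dy\big)dx$. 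The second term is already of the form $\int f(x)\big(\gamma_V(x)\,g(x)\big)dx$ up to sign. Collecting the two contributions gives $\int f(x)\,(\mathcal{L}_V^* g)(x)\,dx$ with $\mathcal{L}_V^*$ exactly as in \eqref{trw}, and since this holds for all $f\in C_b([0,1])$ (which is dense in $L^2$), the identification of the dual is complete.

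The steps in order would be: (1) write out the bilinear pairing $\langle \mathcal{L}_V f, g\rangle$ using \eqref{gerador_gibs}; (2) separate into the ``gain'' term $\int\!\int \gamma_V(x) f(y) Q_V(x,y) g(x)\,dy\,dx$ and the ``loss'' term $-\int \gamma_V(x) f(x) g(x)\,dx$ (here using $\int Q_V(x,y)\,dy=1$ from \eqref{orp} to collapse the $y$-integral in the loss term); (3) apply Fubini to the gain term and swap the names of $x$ and $y$ to bring it to the form $\int f(x)\big(\int \gamma_V(y) g(y) Q_V(y,x)\,dy\big)dx$; (4) combine and read off $\mathcal{L}_V^* g$; (5) note density of continuous functions to conclude.

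The only genuine technical point — and the one I would be most careful about — is the justification of Fubini's theorem: one needs that $(x,y)\mapsto \gamma_V(x)\,Q_V(x,y)\,f(y)\,g(x)$ is integrable on $[0,1]^2$. This is immediate here because $P$ is continuous on the compact square $[0,1]^2$, $r_V$ is Hölder and bounded away from $0$ and $\infty$ (it is a positive continuous function on a compact interval, by Assumption \ref{assumPF}), and $\gamma_V=1+\lambda_V-V$ is continuous, so $\gamma_V Q_V$ is a bounded continuous kernel; together with $f$ bounded and $g\in L^2\subset L^1$ on $[0,1]$, integrability holds. Everything else is a routine relabelling, so I would expect the proof to be short — essentially the computation already carried out inside the proof of Lemma \ref{eeste}, now done against a test function rather than applying $\int Q_V(x,\cdot)$ directly.
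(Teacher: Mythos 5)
Your proposal is correct and follows essentially the same route as the paper's proof: write the pairing $\int (\mathcal{L}_V f)(x)g(x)\,dx$, split into the gain and loss terms, use $\int Q_V(x,y)\,dy=1$ from \eqref{orp} to collapse the loss term, and apply Fubini with a relabelling of variables in the gain term to read off \eqref{trw}. Your extra remark justifying Fubini via the boundedness and continuity of $\gamma_V Q_V$ on the compact square is a sound (if implicit in the paper) addition.
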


\begin{proof} Given the functions $f,g$ we get
\begin{eqnarray*}
&&\int (\mathcal{L}_V f)(x) g(x) dx\\*
&=&\int \gamma_V(x)g(x)\int [f(y)-f(x)]Q_V(x,y) dy dx\\
&=&\int \int \gamma_V(x)g(x) Q_V(x,y)f(y)dy dx  -\int \gamma_V(x)f(x) g(x) \left[\int Q_V(x,y) dy \right] dx\\
&=&\int f(y) \int \gamma_V(x)g(x) Q_V(x,y) dx dy  -\int \gamma_V(x)f(x) g(x) dx\\
&=&\int f(z)\left[\int \gamma_V(x)g(x) Q_V(x,z) dx  - \gamma_V(z)g(z)\right] dz = \int f(z) (\mathcal{L}_V^*g)(z) dz.
\end{eqnarray*}
\end{proof}

Next, we show that $\pi_V$ is the stationary density.

\begin{prop} \label{ppo} 
The density $\pi_V$ satisfies $\mathcal{L}_V^*(\pi_V)=0.$
\end{prop}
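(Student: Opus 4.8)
The plan is a direct substitution: plug $\pi_V=\ell_V r_V$ into the formula for $\mathcal{L}_V^*$ from Lemma \ref{lemdual} and simplify using the explicit form of $Q_V$ together with the left-eigenfunction equation \eqref{pot2}. By \eqref{trw},
$$(\mathcal{L}_V^*\pi_V)(x)=\int \gamma_V(y)\,\ell_V(y)\,r_V(y)\,Q_V(y,x)\,dy-\gamma_V(x)\,\ell_V(x)\,r_V(x).$$
First I would substitute $Q_V(y,x)=\dfrac{P(y,x)r_V(x)}{r_V(y)\gamma_V(y)}$ from \eqref{gamma} into the integrand; the factors $\gamma_V(y)$ and $r_V(y)$ cancel, leaving
$$\gamma_V(y)\,\ell_V(y)\,r_V(y)\,Q_V(y,x)=\ell_V(y)\,P(y,x)\,r_V(x).$$
Then the integral term becomes $r_V(x)\displaystyle\int \ell_V(y)P(y,x)\,dy$.

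Next I would invoke the left-eigenfunction equation \eqref{pot2}, which states $\int \ell_V(z)P(z,x)\,dz=(1+\lambda_V-V(x))\ell_V(x)=\gamma_V(x)\ell_V(x)$ by the definition of $\gamma_V$ in \eqref{gamma}. Hence the integral term equals $r_V(x)\gamma_V(x)\ell_V(x)$, which exactly cancels the subtracted term $\gamma_V(x)\ell_V(x)r_V(x)$, giving $(\mathcal{L}_V^*\pi_V)(x)=0$ for every $x\in[0,1]$.

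There is essentially no obstacle here beyond bookkeeping: the argument is the continuous-time analogue of the classical fact that the product of left and right Perron eigenvectors is stationary for the normalized transfer operator. The only points to watch are (i) applying \eqref{pot2} in the correct variable (the one being integrated) rather than \eqref{pot}, and (ii) noting that the normalization conditions \eqref{rrty} play no role in the vanishing itself --- they only ensure that $\pi_V=\ell_V r_V$ is genuinely a probability density, so that $\mathbb{P}_V$ is well defined. One may optionally remark that, since $\mathcal{L}_V$ generates the (normalized, hence Markovian) semigroup $\mathcal{P}_t^V$ by Lemma \ref{eeste}, the identity $\mathcal{L}_V^*\pi_V=0$ is equivalent to $\pi_V$ being invariant for $\{\mathcal{P}_t^V,\ t\geq 0\}$.
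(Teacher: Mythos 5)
Your proposal is correct and matches the paper's own proof essentially line for line: substitute $\pi_V=\ell_V r_V$ into the dual formula from Lemma \ref{lemdual}, cancel using the explicit form of $Q_V$ from \eqref{gamma}, and conclude with the left-eigenfunction equation \eqref{pot2}. The additional remarks on normalization and semigroup invariance are fine but not needed.
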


\begin{proof} From equations \eqref{pot2} and \eqref{gamma} we get that, for any point $x$,
\begin{eqnarray*}
(\mathcal{L}_V^*\pi_V)(x)&=&\int \gamma_V (y) \ell_V(y) r_V(y) Q_V(y,x) dy - \gamma_V(x) \ell_V(x) r_V(x)\\
&=& \int \gamma_V(y) \ell_V(y) r_V (y)  \left(\frac{P(y,x)r_V(x)}{r_V(y)\gamma_V(y)}\right) dy - \gamma_V(x)\ell_V(x) r_V (x)\\
&=& r_V(x) \int \ell_V(y) P(y,x) dy - \gamma_V(x)\ell_V(x) r_V (x)\\
&=& r_V(x)\left[\int \ell_V(y) P(y,x) dy - \gamma_V(x)\ell_V(x)\right]=0.
\end{eqnarray*}
\end{proof}

From the above, we get that for any $x$,
$$\int \gamma_V(y)\pi(y)Q_V(y,x)dy = \gamma_V(x) \pi_V(x).$$

\begin{remm} We have
$$(\mathcal{P}^V_t 1)  = e^{t\,\mathcal{L}_V } (1)=1$$
and
$$(\mathcal{P}^V_t)^*(\pi_V) =   (e^{t\,\mathcal{L}_V^* }  \pi_V) = \pi_V.$$
\end{remm}

\begin{defi}[Gibbs probability]\label{poi} The probability $\mathbb{P}^V$ induced on $\mathcal{D}$ by the Gibbs Markov process (with infinitesimal generator $\mathcal L_V$ and stationary probability $\pi_V$) will be called the Gibbs probability for the potential $V$ (and the a priori infinitesimal generator $L$). This $\mathbb{P}^V$ is  invariant   for the      shift $\{\Theta_s, s \geq 0\}$.
\end{defi}

In the case $V\equiv 0$, $\mathbb{P}^V$ is the a priori probability $\mathbb{P}$ of Definition \ref{kas}.


\section{Relative Entropy,  Pressure and the equilibrium state for $V$} \label{SRE}

In this section, we will consider a variational problem in the continuous-time setting, analogous to the pressure problem in the discrete-time setting. This requires a meaning for entropy. Thus, we will define the relative entropy. A continuous-time stationary Markov process that maximizes our variational problem is called {\it continuous-time equilibrium state for $V$}. 

Consider the infinitesimal generator $\tilde{\mathcal L}$, which acts on bounded measurable functions $f:[0,1]\to \mathbb R$ as
$$(\tilde{\mathcal L} f)(x)= \int \big[f(y)-f(x)\big]\tfrac{\phi(y)}{\phi(x)}P(x,y)dy,$$
where $\phi\in C_b([0,1])$. To rewrite the operator above on the traditional form of an infinitesimal generator, we consider
$$(\tilde{\mathcal L} f)(x)=\tilde{\gamma}(x)\, \int \big[f(y)-f(x)\big]\tilde Q(x,y)\,dy\,.$$

\begin{theo} \label{pois1}
The invariant probability for $\tilde{\mathcal L}$ is
\begin{equation} \label{pois} \tilde\mu(dx)=\frac{\phi(x)\tilde \ell_\phi(x)}{\Vert\phi\Vert\Vert \tilde{\ell}_\phi\Vert}dx,\end{equation} 
where  $\tilde \ell_\phi$ satisfies
$$\frac{1}{\tilde \ell_\phi(x)}\int \tilde\ell_\phi(y)P(y,x) dy=\tilde \gamma (x).$$
\end{theo}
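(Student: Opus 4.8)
The plan is to follow the same pattern as Lemma \ref{lemdual} and Proposition \ref{ppo}: rewrite $\tilde{\mathcal L}$ as a genuine jump generator, compute its $L^2(dx)$-adjoint, and check that $\phi\,\tilde\ell_\phi$ lies in its kernel, dealing with normalization only at the end.

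First I would bring $\tilde{\mathcal L}$ to canonical form. Putting
$$\tilde\gamma(x)=\frac{1}{\phi(x)}\int \phi(y)\,P(x,y)\,dy,\qquad \tilde Q(x,y)=\frac{\phi(y)\,P(x,y)}{\phi(x)\,\tilde\gamma(x)},$$
one checks at once that $\int \tilde Q(x,y)\,dy=1$ for every $x$, so that $(\tilde Q,\tilde\gamma)$ is a legitimate pair (kernel, jump rate) and $(\tilde{\mathcal L}f)(x)=\tilde\gamma(x)\int[f(y)-f(x)]\tilde Q(x,y)\,dy$, exactly the form of \eqref{gerador_gibs}. In particular $\tilde{\mathcal L}$ generates a continuous-time Markov jump process on $[0,1]$ and, just as for $\mathcal L_V$ in Definition \ref{GiGi}, its invariant density is characterized by $\tilde{\mathcal L}^*\tilde\mu=0$.

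Next, the computation of Lemma \ref{lemdual} goes through verbatim with $(Q_V,\gamma_V)$ replaced by $(\tilde Q,\tilde\gamma)$, giving the adjoint
$$(\tilde{\mathcal L}^*g)(x)=\int \tilde\gamma(y)\,g(y)\,\tilde Q(y,x)\,dy-\tilde\gamma(x)\,g(x),$$
the analogue of \eqref{trw}. Applying this to $\pi(x):=\phi(x)\,\tilde\ell_\phi(x)$ and using $\phi(y)\,\tilde\gamma(y)\,\tilde Q(y,x)=\phi(x)\,P(y,x)$ (the definition of $\tilde Q$), the factors $\tilde\gamma(y)$ and $\phi(y)$ cancel in the integral, so
$$\int \tilde\gamma(y)\,\phi(y)\,\tilde\ell_\phi(y)\,\tilde Q(y,x)\,dy=\phi(x)\int \tilde\ell_\phi(y)\,P(y,x)\,dy=\phi(x)\,\tilde\gamma(x)\,\tilde\ell_\phi(x),$$
the last equality being precisely the relation $\int\tilde\ell_\phi(y)P(y,x)\,dy=\tilde\gamma(x)\tilde\ell_\phi(x)$ assumed in the statement. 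Hence $(\tilde{\mathcal L}^*\pi)(x)=\phi(x)\tilde\gamma(x)\tilde\ell_\phi(x)-\tilde\gamma(x)\phi(x)\tilde\ell_\phi(x)=0$. Dividing $\pi$ by the normalizing constant $\int\phi(x)\tilde\ell_\phi(x)\,dx$ (written $\Vert\phi\Vert\,\Vert\tilde\ell_\phi\Vert$ in \eqref{pois}, once $\phi$ and $\tilde\ell_\phi$ carry the normalizations of \eqref{rrty}) yields the probability density $\tilde\mu$, which is still annihilated by $\tilde{\mathcal L}^*$; this gives invariance.

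I expect the only substantive points, beyond this formal manipulation, to be the following two. First, the existence and uniqueness of $\tilde\ell_\phi$: it is the left invariant density of the kernel $P$ reweighted by $\tilde\gamma$, and it should be produced exactly as the density $\theta$ of \eqref{equacaodotheta}, i.e. via the Perron-Frobenius / transfer-operator argument valid under the H\"older hypothesis --- this is where the real work sits. Second, upgrading ``$\tilde\mu$ is an invariant probability'' to ``$\tilde\mu$ is the invariant probability'' requires irreducibility (positivity of $P$, hence of $\tilde Q$) in order to exclude other stationary measures, which is the same input already used for the uniqueness of $\theta$ and of $\pi_V$ in Proposition \ref{ppo}.
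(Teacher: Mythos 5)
Your argument is essentially the paper's own proof: you compute $\tilde{\mathcal L}^*$ exactly as in Lemma \ref{lemdual}, use the identity $\tilde\gamma(y)\tilde Q(y,x)=\frac{\phi(x)}{\phi(y)}P(y,x)$, and verify that the normalized density $\phi\,\tilde\ell_\phi$ is annihilated by $\tilde{\mathcal L}^*$ via the defining relation for $\tilde\ell_\phi$. Your added remarks on the existence of $\tilde\ell_\phi$ (via the H\"older/Perron--Frobenius argument behind \eqref{equacaodotheta}) and on uniqueness of the invariant measure go slightly beyond what the paper's proof records, but the core computation coincides.
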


\begin{proof}
Repeating the computation we did on the proof of Lemma \ref{lemdual}, we can show that, for any density $g$,
\begin{eqnarray*}
(\tilde{\mathcal{L}}^*g)(x)&=& \int \tilde{\gamma}(y) g(y) \tilde Q(y,x) dy - \tilde{\gamma}(x) g(x)\\
&=& \int g(y) \frac{\phi(x)}{\phi(y)}P(y,x) dy - \tilde{\gamma}(x) g(x).
\end{eqnarray*}

In particular, for $g=\frac{\phi\tilde{\ell}_\phi}{\Vert\phi\Vert\Vert\tilde{\ell}_\phi\Vert}$, we have
$$(\tilde{\mathcal{L}}^*g)(x)= \frac{\phi(x)}{\Vert\phi\Vert\Vert\tilde{\ell}_\phi\Vert}\left( \int \tilde{\ell}_\phi(y)P(y,x)dy- \tilde{\gamma}(x) \tilde{\ell}_\phi(x)\right)=0.$$
\end{proof}

\begin{defi}The probability $\tilde{\mathbb P}_{\tilde\mu}$ on D is called admissible if it is induced by the continuous-time Markov chain with infinitesimal generator $\tilde{\mathcal L}$ and initial measure $\tilde \mu$. 
\end{defi}

For $\tilde{\mathbb P}_{\tilde \mu}$ admissible and  $\mathbb P_{\tilde \mu}$ the probability induced by the original continuous-time Markov chain  with infinitesimal generator $ L$, defined in equation \eqref{L}, and initial probability $\tilde \mu$, define for $T>0$,
$$H_T(\tilde{\mathbb P}_{\tilde \mu}\vert\mathbb P_{\tilde \mu})=-\int_D \log\left(\left.\frac{d\tilde{\mathbb P}_{\tilde \mu}}{d\mathbb P_{\tilde \mu}}\right|_{\mathcal F_T}\right)(\omega)d\tilde{\mathbb P}_{\tilde \mu}(\omega).$$
Notice that we are using the same initial measure $\tilde{\mu}$ for both processes, so the probabilities are absolutely continuous with respect to each other.

Using this $H_T$ above defined, we introduce a meaning for the relative entropy.

\begin{defi}[Relative entropy]\label{dede1} For a fixed initial probability $\tilde{\mu}$, the limit
$$H(\tilde{\mathbb P}_{\tilde \mu}\vert\mathbb P_{\tilde \mu})=\lim_{T\to\infty}\dfrac{1}{T}H_T(\tilde{\mathbb P}_{\tilde \mu}\vert\mathbb P_{\tilde \mu})$$
is called the relative entropy of $\tilde{\p}_{\tilde{\mu}}$ with respect to $\p_{\tilde{\mu}}$.
\end{defi}

Since $L$ and $\tilde{\mathcal{L}}$ both generate Markov jump processes, we have a Radon-Nikodym derivative of them (see \cite{AG}). This implies that
\begin{eqnarray*}
\log\left(\left.\frac{d\tilde{\mathbb P}_{\tilde \mu}}{d\mathbb P_{\tilde \mu}}\right|_{\mathcal F_T}\right)(\omega)&=&\int_0^T [1-\tilde{\gamma}(\omega_s)]ds+\sum_{s\leq T}\log\left(\tilde{\gamma}(\omega_{s-})\frac{\varphi(\omega_s)}{\varphi(\omega_{s-})\tilde{\gamma}(\omega_{s-})}\right)\\
&=&\int_0^T [1-\tilde{\gamma}(\omega_s)]ds+\sum_{s\leq T}\left\{\log(\varphi(\omega_s))-\log(\varphi(\omega_{s-}))\right\}\\
&=&\int_0^T [1-\tilde{\gamma}(\omega_s)]ds+\log\left(\phi(\omega_T)\right)-\log\left(\phi(\omega_{0})\right).
\end{eqnarray*}
Then,
\begin{equation}\label{HH}
H(\tilde{\mathbb P}_{\tilde \mu}\vert\mathbb P_{\tilde \mu})=\int [\tilde{\gamma}(x)-1]d\tilde\mu(x).
\end{equation}

For a H\"older class potential $V$, the probability $\p^V_{\pi_V}$ is admissible. Then,
\begin{equation}\label{GibbsPre}H(\p^V_{\pi_V}\vert\p_{\pi_V})=\int [\gamma_V(x)-1]d\pi_V(x)=\lambda_V-\int V(x)d\mu_V(x).\end{equation}

\begin{defi}[Pressure]\label{press} We denote the Pressure (or Free Energy) of $V$ as the value
$$\textbf{P}(V):= \sup_{\at{\tilde{\mathbb P}_{\tilde \mu}}{\text{ admissible}}}\left\{H(\tilde{\mathbb P}_{\tilde \mu}\vert\mathbb P_{\tilde \mu})+\int V(x) d\tilde \mu(x)\right\}.$$
\end{defi}

Using equation \eqref{HH}, the pressure can be written as
$$\textbf{P}(V)= \sup_{\at{ \tilde{\mathbb P}_{\tilde \mu}}{\text{admissible}}}\int [\tilde{\gamma}(x)-1+V(x)]\,d\tilde\mu(x).$$
Recalling the expressions of $\tilde \gamma$ and $\tilde \mu$, we have
$$\textbf{P}(V)=\sup_{\phi>0} \int \left(\frac{\tilde \ell_\phi}{\Vert\tilde \ell_\phi \Vert}\right)(x)(L+V)\left(\frac{\phi}{\Vert \phi\Vert}\right)(x)dx\,= \lambda_V.$$
By equation \eqref{GibbsPre}, this means that the Gibbs probability is the one that maximizes the pressure. In some sense, similar results are true for other settings, see \cite{DoVa,LMST,BCLMS,Ki2}.


\section{Time-reversal process and entropy production} \label{SEP}

In this section, we consider that the time parameter is bounded, $t\in [0,T]$ for a fixed $T>0$, to explore the time-reversal process. We will show that this time-reversal process is the jump process generated by the dual operator of $L$ in $\mathcal{L}^2(\mu)$, where $\mu$ is the invariant measure for $L^*$ defined in Section \ref{int}. Later, we study the properties of the entropy production rate, which can be used to describe the amount of work dissipated by an irreversible system. One can find related results in \cite{Wang,Da1,LM7,MN,MNS}.

Remember that the invariant measure satisfies $\mu(dx)=\theta(x)dx$ and that $L^{*} (\theta)=0$, where $L^{*}$ acts on $\mathcal{L}^2(dx)$. The substantial change from $\mathcal{L}^2(dx)$ to $\mathcal{L}^2(\mu)$ is that our reference measure, which was simply Lebesgue measure $dx$, becomes now $\theta(x)dx$. Taking that into account, the inner product in this new space is given by
$$\langle f,g \rangle_{\mu}=\int f(x)g(x)\mu(dx)=\int f(x)g(x)\theta(x)dx.$$

\begin{prop}
The dual operator of $L$ over $\mathcal{L}^2(\mu)$ is
$$ (\mathfrak L^{*}g)(x)= \int [g(y)-g(x)] \frac{\theta(y)}{\theta(x)} P(y,x)dy.$$
\end{prop}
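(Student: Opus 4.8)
The plan is to verify the adjoint relation $\langle Lf,g\rangle_{\mu}=\langle f,\mathfrak L^{*}g\rangle_{\mu}$ directly, exploiting the fact that $\mu$ has a density: writing $\mu(dx)=\theta(x)dx$ with $\theta$ continuous and strictly positive (hence bounded away from $0$ on the compact interval $[0,1]$), the map $g\mapsto \theta g$ is a bounded bijection of $\mathcal L^{2}(dx)$, and the $\mathcal L^{2}(\mu)$-adjoint of any operator $A$ is obtained from its $\mathcal L^{2}(dx)$-adjoint $A^{*}$ by the conjugation rule $\mathfrak A^{*}g=\theta^{-1}A^{*}(\theta g)$. Indeed, $\langle Lf,g\rangle_{\mu}=\int (Lf)(x)g(x)\theta(x)\,dx=\langle Lf,\theta g\rangle_{dx}=\langle f,L^{*}(\theta g)\rangle_{dx}=\langle f,\theta^{-1}L^{*}(\theta g)\rangle_{\mu}$, so it suffices to compute $\theta^{-1}L^{*}(\theta g)$ and identify it with the claimed formula. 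Here $L$ is the bounded operator of equation \eqref{L} and $L^{*}$ is its $\mathcal L^{2}(dx)$-dual given explicitly by equation \eqref{Ls}; boundedness guarantees the adjoint is genuine and that testing against $f\in C_b([0,1])$ (dense in $\mathcal L^2$) suffices.

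Next I would carry out the computation. By \eqref{Ls}, $[L^{*}(\theta g)](x)=\int P(y,x)\,\theta(y)g(y)\,dy-\theta(x)g(x)$, hence
$$\frac{1}{\theta(x)}[L^{*}(\theta g)](x)=\frac{1}{\theta(x)}\int P(y,x)\,\theta(y)g(y)\,dy-g(x).$$
The one structural input is equation \eqref{equacaodotheta}, which says $\int \theta(y)P(y,x)\,dy=\theta(x)$; using it to rewrite the second term as $g(x)=\dfrac{g(x)}{\theta(x)}\displaystyle\int \theta(y)P(y,x)\,dy$, the two terms combine into
$$\frac{1}{\theta(x)}\int \big[g(y)-g(x)\big]\,\theta(y)\,P(y,x)\,dy=\int \big[g(y)-g(x)\big]\,\frac{\theta(y)}{\theta(x)}\,P(y,x)\,dy,$$
which is exactly $(\mathfrak L^{*}g)(x)$. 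This proves the proposition.

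For completeness one can also do the symmetric Fubini computation on $\langle Lf,g\rangle_{\mu}=\int\!\!\int[f(y)-f(x)]P(x,y)g(x)\theta(x)\,dy\,dx$: splitting into two integrals, relabelling $x\leftrightarrow y$ in the first, using that $P(x,\cdot)$ integrates to $1$ (it is the jump kernel $P(x,dy)$ of a rate-$1$ Markov process) in the second, and then expanding $\langle f,\mathfrak L^{*}g\rangle_{\mu}$ the same way and invoking \eqref{equacaodotheta} once more, one gets the same pair of terms. There is no real obstacle here — the proof is a one-line change of reference measure plus the characterization \eqref{equacaodotheta} of $\theta$ — the only point worth flagging explicitly is which normalization of $P$ is in force (the stochasticity $\int P(x,y)\,dy=1$ of the jump kernel, which is what makes \eqref{Ls} the legitimate $\mathcal L^{2}(dx)$-dual of \eqref{L}), and that $\theta>0$ is what allows the division by $\theta(x)$ throughout.
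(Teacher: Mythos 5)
Your proof is correct. It reaches the formula by conjugation: you quote the $\mathcal L^{2}(dx)$-dual \eqref{Ls} and use the identity $\mathfrak L^{*}g=\theta^{-1}L^{*}(\theta g)$ together with \eqref{equacaodotheta}, whereas the paper performs the Fubini computation directly in $\mathcal L^{2}(\mu)$ (essentially the computation you sketch in your closing paragraph), again invoking \eqref{equacaodotheta} in the form $\int\frac{\theta(y)}{\theta(x)}P(y,x)\,dy=1$ to merge the two terms. The two arguments are equivalent in content, since the verification that \eqref{Ls} is the $dx$-adjoint of \eqref{L} is itself the same Fubini step, so your version merely relocates it; what it buys is a one-line derivation once \eqref{Ls} is accepted, plus an explicit statement of where strict positivity of $\theta$ is used. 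Your normalization flag is apt: both your route and the paper's implicitly use $\int P(x,y)\,dy=1$ (row-stochasticity of the jump kernel, needed both for the second term of $\langle Lf,g\rangle_{\mu}$ to collapse and for \eqref{Ls} to be the legitimate $dx$-dual), while the displayed hypothesis \eqref{kj} normalizes in the $x$-variable; the paper's examples are doubly stochastic, so nothing breaks there, but the point deserves the explicit mention you give it.
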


\begin{proof}
To verify this, just compute
\begin{eqnarray*}
\langle Lf, g\rangle_{\mu}&=&\int (Lf)(x)g(x)\theta(x)dx\\
&=&\int\int g(x) \theta(x)P(x,y)f(y) dy dx - \int f(x)g(x) \theta(x)dx\\
&=&\int f(y) \int g(x)\theta(x)P(x,y) dx dy - \int f(x)g(x) \theta(x)dx\\
&=&\int f(z)\left(\int g(x)\frac{\theta(x)}{\theta(z)}P(x,z) dx\right) \theta(z)dz - \int f(z)g(z) \theta(z)dz\\
&=&\int f(z)\left(\int [g(x)-g(z)]\frac{\theta(x)}{\theta(z)}P(x,z) dx\right) \theta(z)dz\\
&=&\int f(z) (\mathfrak L^{*}g)(z) \theta(z)dz=\langle f, \mathfrak L^{*}g\rangle_\mu.
\end{eqnarray*}

In this computation, we use that $\int \frac{\theta(x)}{\theta(z)}P(x,z) dy =1$, which follows directly from equation \eqref{equacaodotheta}.
\end{proof}

Having discussed that, we now define the time-reversal process associated with the stationary Markov process $(X_t, \mu)$ and an interval of time $[0,T]$. The new process, denoted by $(\hat{X}_t)$, satisfies
$$ \E_{\mu} [\, g(\hat{X_0})f(\hat{X_t})]:=\E_{\mu}[\, g(X_T)f(X_{T-t})].$$

\begin{prop} The time-reversal process $\hat{X}_t$ has transition family equal to $P_t^*=e^{t\mathfrak{L}^*}$, the dual operator of $P_t$ over $\mathcal{L}^2(\mu)$.
\end{prop}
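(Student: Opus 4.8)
The plan is to verify the defining identity for the time-reversal process directly, using the semigroup property and the characterization of $\mathfrak L^*$ as the dual of $L$ over $\mathcal L^2(\mu)$. The claim has two parts: first, that $(\hat X_t)$ is a (time-homogeneous) Markov process, and second, that its one-step transition operator is $P_t^* = e^{t\mathfrak L^*}$. The cleanest route is to show that for all bounded measurable $f,g$ and all $0\le t\le T$,
\begin{equation*}
\E_\mu[\,g(\hat X_0)\,f(\hat X_t)\,] = \E_\mu[\,g(\hat X_0)\,(P_t^* f)(\hat X_0)\,] = \int g(x)(e^{t\mathfrak L^*}f)(x)\,\mu(dx),
\end{equation*}
since $\hat X_0$ has law $\mu$ (because $\mu$ is stationary, $X_T\sim\mu$). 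This determines the one-dimensional conditional law of $\hat X_t$ given $\hat X_0$; an analogous computation with test functions at several times $0\le t_1\le\cdots\le t_n\le T$ then pins down all finite-dimensional distributions and simultaneously exhibits the Markov/semigroup structure.

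The key computation is: by definition of $(\hat X_t)$ and then stationarity of $(X_s,\mu)$,
\begin{equation*}
\E_\mu[\,g(\hat X_0)f(\hat X_t)\,] = \E_\mu[\,g(X_T)f(X_{T-t})\,] = \E_\mu[\,g(X_t)f(X_0)\,] = \int (e^{t L}g)(x)\,f(x)\,\mu(dx),
\end{equation*}
where the last equality uses that $(e^{tL}g)(x) = \E_x[g(X_t)]$ together with the Markov property at time $0$ and that $X_0\sim\mu$. Now apply the duality identity from the previous Proposition, $\langle e^{tL}g, f\rangle_\mu = \langle g, e^{t\mathfrak L^*}f\rangle_\mu$ — which follows from $\langle Lf,g\rangle_\mu = \langle f,\mathfrak L^* g\rangle_\mu$ by exponentiating (both $L$ and $\mathfrak L^*$ are bounded, so $e^{tL}$ and $e^{t\mathfrak L^*}$ are genuine bounded operators and the series manipulation is legitimate). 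This gives exactly $\int g(x)(e^{t\mathfrak L^*}f)(x)\,\mu(dx)$, as desired. For the multi-time statement one reverses the order of the times: writing $s_i = T - t_{n+1-i}$, the joint law of $(\hat X_{t_1},\dots,\hat X_{t_n})$ equals that of $(X_{s_n},\dots,X_{s_1})$ in reversed order, and iterating the one-step duality (peeling off one transition kernel at a time, exactly as in the displayed cylinder computations of Section \ref{int}) shows the reversed chain has transition operators $e^{(t_{i+1}-t_i)\mathfrak L^*}$ and initial law $\mu$.

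The main obstacle is bookkeeping rather than anything conceptual: one must be careful that $\mu$ is indeed the initial law of $\hat X_0$ (stationarity), that $\mathfrak L^*$ as defined preserves the class of functions on which we work and that $e^{t\mathfrak L^*}$ is well-defined as a bounded operator (so that the identity $\langle e^{tL}g,f\rangle_\mu=\langle g,e^{t\mathfrak L^*}f\rangle_\mu$ is not merely formal), and that the finite-dimensional computation is organized so that the conditional expectations telescope correctly — i.e. that the reversed transition kernels compose to give a genuine Markov semigroup and not just a consistent family of marginals. Since $L$ is bounded (hence $\mathfrak L^*$ is too, being its $\mathcal L^2(\mu)$-adjoint) and $\mu$ has a positive continuous density $\theta$ bounded away from $0$, these points are all routine, and the semigroup property $P_{s+t}^* = P_s^* P_t^*$ is inherited directly from that of $e^{tL}$ through the duality.
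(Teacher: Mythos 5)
Your proposal is correct and follows essentially the same route as the paper: the one-step identity $\E_\mu[g(\hat X_0)f(\hat X_t)]=\langle e^{tL}g,f\rangle_\mu=\langle g,e^{t\mathfrak L^*}f\rangle_\mu$, obtained from stationarity, the Markov property of $X$, and the $\mathcal L^2(\mu)$-duality of the previous proposition, is exactly the paper's computation. Your additional care about boundedness of the operators and about pinning down the finite-dimensional distributions of the reversed process is sound but only elaborates on what the paper leaves implicit.
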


\begin{proof} Let $\hat{P}_t$ denote the transition family of $\hat{X}_t$. Using the Markov property and stationarity of the chain $X_t$, notice that this transition family satisfies, for all $ f,g \in \mathcal{L}^2(\mu)$,
\begin{eqnarray*}
\langle \hat{P}_tf,g\rangle_\mu &=& \int (\hat{P}_t f)(x)g(x) \ d\mu (x) = \E_{\mu} [ f(\hat{X}_{t})g(\hat{X}_0)]\\
&=&\E_{\mu} [ f(X_{T-t})g(X_T)] = \E_\mu\left[f(X_{T-t})\E_\mu[g(X_T)\vert \F_{T-t}]\right]\\
&=&\E_{\mu} [ f(X_0)\E_{X_0}[g(X_t)] ] = \int f(x) (P_t g)(x) d\mu(x)\\
&=&\langle f,P_t g\rangle_\mu.
\end{eqnarray*}

Since this is true for all $f,g \in \mathcal{L}^2(\mu)$, we get that $\hat{P}_t=P^{*}_t$. This also means that $\hat{L}=\mathfrak L^{*}$, where $\hat{L}$ is the infinitesimal generator of the semigroup $\hat{P}_t$.
\end{proof}

For a fixed $T>0$, we are interested in the relative entropy of $\hat{\p}_\mu$ with respect to $\p_\mu$, where $\hat{\p}_\mu$ is the probability induced on $\mathcal{D}$ by the time-reversal process with initial measure $\mu$. Notice that, by definition,
$$H_T(\p_\mu\vert\hat{\p}_{\mu})=-\int_D \log\left(\left.\frac{d\p_\mu}{d\hat{\p}_{\mu}}\right|_{\mathcal F_T}\right)(\omega)d\p_{\mu}(\omega).$$

Since, for the processes we are considering, we have $\lambda(x)=\hat{\lambda}(x)=1$ and $\hat{P}(x,dy)=\frac{\theta(y)}{\theta(x)}P(y,x)dy$. The Radon-Nikodym derivative (see \cite{AG}) implies that
$$\log\left(\left.\frac{d\p_\mu}{d\hat{\p}_{\mu}}\right|_{\mathcal F_T}\right)=\sum_{s\leq T}\log \left( \frac{P(X_{s-},X_s) \theta(X_{s-})}{P(X_s,X_{s-}) \theta(X_s)} \right)$$
and, consequently,
\begin{eqnarray*}
-H_T(\p_\mu\vert\hat{\p}_{\mu})&=& \E_{\mu} \left[ \sum_{s \leq T} \left\{\log \left( \frac{P(X_{s-},X_s)}{P(X_s,X_{s-}) } \right) + \log(\theta(X_{s^{-}})) -\log(\theta(X_s))\right\} \right]\\
&=&\E_\mu\left[\sum_{s \leq T} \log \left( \frac{P(X_{s-},X_s)}{P(X_s,X_{s-}) } \right) \right],
\end{eqnarray*}
because, for $\mu$ invariant, the telescopic summation
$$\E_{\mu} \left[ \sum_{s \leq T} \left\{\log(\theta(X_{s^{-}})) -\log(\theta(X_s))\right\} \right]=\E_\mu\left[\log(\theta(X_0))-\log(\theta(X_T))\right]=0.$$

In order to analyze the remaining term of this expression, we use the structure of the Markov process. Denoting by $0=T_0<T_1<\cdots$ the jump times of this process and by $\xi_n$ the value of the process on the interval $[T_{n-1},T_n)$, we have
\begin{eqnarray*}
-H_T(\p_\mu\vert\hat{\p}_{\mu})&=&\sum_{n=1}^\infty \E_\mu\left[\sum_{s \leq T} \log \left( \frac{P(X_{s-},X_s)}{P(X_s,X_{s-}) } \right)\1_{[T_n\leq T < T_{n+1}]} \right]\\
&=&\sum_{n=1}^\infty \E_\mu\left[\sum_{k=0}^{n-1} \log \left( \frac{P(\xi_k,\xi_{k+1})}{P(\xi_{k+1},\xi_k) } \right)\1_{[T_n\leq T < T_{n+1}]} \right]
\end{eqnarray*}

For simplicity, denote $\psi(x,y):=\log\left(\frac{P(x,y)}{P(y,x)}\right)$. Then,
\begin{eqnarray*}
-H_T(\p_\mu\vert\hat{\p}_{\mu})&=&\sum_{n=1}^\infty\sum_{k=0}^{n-1} \E_\mu\left[\psi(\xi_k,\xi_{k+1})\1_{[T_n\leq T < T_{n+1}]} \right]\\
&=&\sum_{n=1}^\infty\left(\E_\mu[\1_{[T_n\leq T < T_{n+1}]}]\sum_{k=0}^{n-1} \E_\mu[\psi(\xi_k,\xi_{k+1})]\right).
\end{eqnarray*}
In this computation, we use that the time variables $T_n$ (defined as the sum of $n$ independent exponential variables $\tau_k$ with parameter $1$) are independent of the spatial variables $\xi_k$. It is important to notice that this is not always true. In the general case, see \cite{KL}, each $\tau_k$ is distributed according to an exponential law of parameter $\lambda(\xi_k)$.

Now, we will analyze separately each expected value on the last expression. The first one is
\begin{eqnarray*}
\E_\mu[\1_{[T_n\leq T < T_{n+1}]}]&=&\int_0^\infty ds_0e^{-s_0}\cdots \int_0^\infty ds_n e^{-s_n}\left(\1_{[0\leq T-\sum_{i=0}^{n-1}s_i<s_n]}\right)\\
&=&e^{-T}\int_0^\infty\cdots\int_0^\infty ds_0\ldots ds_{n-1}\left(\1_{[\sum_{i=0}^{n-1}s_i\leq T]}\right)\\
&=&e^{-T}\frac{T^n}{n!},
\end{eqnarray*}
since the integrals can be recognized as a fraction (exactly $\frac{1}{2^n}$) of the volume of the ball in the $\R^n$ with 1-norm and radius $T$. For the second expected value, we use that $\mu$ is invariant for the chain to rewrite
$$\E_\mu[\psi(\xi_k,\xi_{k+1})]=\E_\mu[\psi(\xi_0,\xi_1)]=\int\mu(dx_0)\int P(x_0,x_1)\psi(x_0,x_1)dx_1,$$
which makes every term of the second sum equal. Then,
\begin{eqnarray*}
-H_T(\p_\mu\vert\hat{\p}_{\mu})&=&\sum_{n=1}^\infty e^{-T}\frac{T^n}{n!} \left(n \int\mu(dx_0)\int P(x_0,x_1)\psi(x_0,x_1)dx_1\right)\\
&=&T e^{-T}\sum_{n=1}^\infty\frac{T^{n-1}}{(n-1)!}\int\mu(dx_0)\int P(x_0,x_1)\psi(x_0,x_1)dx_1\\
&=&T \int\mu(dx_0)\int P(x_0,x_1)\psi(x_0,x_1)dx_1
\end{eqnarray*}

Using the above mentioned tools, we can now give meaning to the entropy production rate. This formulation, however, is not universal and depends on the physical system and its dynamical laws. Different formulations for entropy production are explored on \cite{LP}, where the authors made a review of the progress of these formulations. The point of view presented here relates to the one presented on \cite{BJPP}.

\begin{defi} The entropy production rate is defined as
$$ep := -H(\Prob_{\mu} \vert \hat{\Prob}_{\mu}) = -\lim_{T \to \infty} \frac{1}{T}\ H_T(\Prob_{\mu} \vert \hat{\Prob}_{\mu}).$$
\end{defi}

Using the computations we made before, it is possible to write the entropy production rate as
$$ep = \int \int \log\left(\dfrac{P(x,y)}{P(y,x)}\right) P(x,y) dy d\mu(x).$$

Notice that if we try to apply the concept of entropy production to a reversible process, satisfying $P(x,y)=P(y,x)$, we end up with $ep=0$.

\begin{prop}
For all transition functions $P(x,y)>0$, we have $ep\geq 0$.
\end{prop}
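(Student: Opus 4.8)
The plan is to recognize the expression for the entropy production rate as an integral of a relative-entropy-type quantity and apply the standard inequality $\log(a/b) \geq 1 - b/a$ (equivalently, that $-\log$ is convex and $x\mapsto x\log x$ dominates its tangent line). First I would rewrite
$$ep = \int\int \log\!\left(\frac{P(x,y)}{P(y,x)}\right) P(x,y)\,dy\,d\mu(x) = \int\int \log\!\left(\frac{P(x,y)\theta(x)}{P(y,x)\theta(y)}\right) P(x,y)\theta(x)\,dy\,dx,$$
where I have used $\mu(dx)=\theta(x)dx$ and inserted the extra factor $\theta(x)/\theta(y)$ inside the logarithm; the correction terms integrate to zero by the telescopic/stationarity argument already carried out in the section (precisely, $\int\int[\log\theta(x)-\log\theta(y)]P(x,y)\theta(x)\,dy\,dx = 0$ using the invariance equation \eqref{equacaodotheta}). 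Writing $a(x,y) := P(x,y)\theta(x)$ and $b(x,y) := P(y,x)\theta(y)$, the integrand becomes $a(x,y)\log\bigl(a(x,y)/b(x,y)\bigr)$.

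The key step is the elementary inequality $u\log(u/v) \geq u - v$ for all $u,v>0$, applied pointwise with $u = a(x,y)$, $v = b(x,y)$. Integrating this over $[0,1]^2$ gives
$$ep \geq \int\int \bigl(a(x,y) - b(x,y)\bigr)\,dy\,dx = \int\int P(x,y)\theta(x)\,dy\,dx - \int\int P(y,x)\theta(y)\,dy\,dx.$$
Now the first double integral equals $\int\theta(x)\bigl(\int P(x,y)\,dy\bigr)dx = \int\theta(x)\,dx = 1$ by \eqref{kj} (note $\int P(x,y)\,dy = 1$ follows from the definition of the generator, equivalently $L(1)=0$), while the second, after swapping the names of the variables, equals $\int\theta(y)\bigl(\int P(y,x)\,dx\bigr)dy$; here I use $\int P(y,x)\,dx = 1$, which is exactly the normalization \eqref{kj} imposed on $P$. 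Hence the right-hand side is $1 - 1 = 0$, so $ep \geq 0$.

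I would then remark that equality holds if and only if $a(x,y) = b(x,y)$ for Lebesgue-a.e.\ $(x,y)$, i.e.\ $P(x,y)\theta(x) = P(y,x)\theta(y)$, which is precisely the detailed-balance (reversibility) condition for the process; in particular when $P$ is symmetric one has $\theta\equiv 1$ and $ep=0$, recovering the observation made just before the proposition. The only mild technical point worth addressing is integrability: since $P$ is continuous and strictly positive on the compact square $[0,1]^2$, it is bounded above and below away from zero, so $\log(P(x,y)/P(y,x))$ is bounded and all the integrals above are finite and the manipulations (inserting the $\theta$ factors, Fubini, subtracting the two finite integrals) are legitimate. This boundedness is where the hypothesis $P(x,y)>0$ in the statement is used; it also guarantees $\theta$ is bounded above and below, so $a$ and $b$ are bounded and bounded away from $0$, making the pointwise inequality and its integral harmless. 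I do not expect any serious obstacle here — the content is entirely the positivity of relative entropy dressed up in the present notation; the only thing to be careful about is bookkeeping of which normalization of $P$ (integration in the first versus the second variable) is being used at each step.
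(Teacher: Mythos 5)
Your argument is correct and is essentially the paper's own proof: you insert the stationary density $\theta$ inside the logarithm (the correction term vanishing by stationarity) and then invoke positivity of relative entropy, merely replacing the paper's Jensen step for the convex function $\psi(z)=z\log z$ with the pointwise inequality $u\log(u/v)\geq u-v$ integrated over the square, and the same two normalization identities make either version close. One small bookkeeping point: the fact $\int P(y,x)\,dx=1$ that you need is the kernel normalization of $P$ in its \emph{second} variable (implicit in $P(x,dy)=P(x,y)\,dy$ being the jump kernel, and also avoidable by integrating $P(y,x)\theta(y)$ in $y$ first and using \eqref{equacaodotheta}), not equation \eqref{kj}, which normalizes $P$ in its first variable.
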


\begin{proof}
Since $\mathfrak{L}^{*}(1)=0$, we have $\int (Lf)(x) \,d\mu(x) = 0$ for every continuous function $f$. For $f=-\log \circ\, \theta$, we have that
$$ \int \int [\log(\theta(x))-\log(\theta(y))]P(x,y)dy d\mu(x)=0.$$
Therefore, we can add this term to the entropy production rate without changing its value:
\begin{eqnarray*}
ep&=&\int \int \log\left(\frac{\theta(x)P(x,y)}{\theta(y)P(y,x)}\right) P(x,y) dy d\mu(x)\\
&=&\int \int \left[\frac{\theta(x)P(x,y)}{\theta(y)P(y,x)}\right]\log\left(\frac{\theta(x)P(x,y)}{\theta(y)P(y,x)}\right) \frac{\theta(y)}{\theta(x)}P(y,x) dy d\mu(x).
\end{eqnarray*}

Since $\int \int \frac{\theta(y)}{\theta(x)}P(y,x) dy d\mu(x) = 1$, we can use this as a probability measure in order to apply the Jensen inequality for the convex function $\psi(z)= z \log z $. In this way,
\begin{eqnarray*}
ep&=&\int \int \psi\left(\frac{\theta(x)P(x,y)}{\theta(y)P(y,x)}\right) \frac{\theta(y)}{\theta(x)}P(y,x) dy d\mu(x)\\
&\geq&\psi\left(\int\int \left[\frac{\theta(x)P(x,y)}{\theta(y)P(y,x)}\right]\frac{\theta(y)}{\theta(x)}P(y,x) dy d\mu(x)\right)\\
&=&\psi\left(\int\int P(x,y)dyd\mu(x)\right)=\psi(1)=0.
\end{eqnarray*}
\end{proof}

The idea of this proof was similar to the one in Lemma 3.3 in \cite{PP}.

\begin{theo}\label{estamos} The entropy production rate of the time reversal process is the same as the original process:
$$ep^{*}:=-H(\hat{\mathbb{P}}_{\mu} \vert \mathbb{P}_{\mu})=ep.$$
\end{theo}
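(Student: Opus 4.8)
The plan is to exploit the explicit formula for the entropy production rate established just above, namely
$$ep = \int\int \log\left(\frac{P(x,y)}{P(y,x)}\right)P(x,y)\,dy\,d\mu(x),$$
together with the fact, already proven, that the time-reversal of the stationary Markov process with generator $L$ (and invariant measure $\mu$) is the Markov process whose generator is $\mathfrak L^{*}$ — that is, it has jump rate $\hat\lambda\equiv 1$ and transition kernel $\hat P(x,y) = \frac{\theta(y)}{\theta(x)}P(y,x)$. The strategy is therefore to run the identical computation that produced the displayed formula for $ep$, but with the pair $(P,\theta)$ replaced by $(\hat P,\theta)$, observing first that $\mu(dx)=\theta(x)dx$ remains the invariant measure for the reversed chain (this is exactly the content of $\mathfrak L^{*}(1)=0$, equivalently $\int \hat P(x,y)\,dx$ integrating correctly against $\theta$, which follows from equation \eqref{equacaodotheta}).

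First I would write down, by the same derivation used for $ep$, the expression
$$ep^{*} = \int\int \log\left(\frac{\hat P(x,y)}{\hat P(y,x)}\right)\hat P(x,y)\,dy\,d\mu(x).$$
Next I would substitute $\hat P(x,y) = \frac{\theta(y)}{\theta(x)}P(y,x)$ and $\hat P(y,x) = \frac{\theta(x)}{\theta(y)}P(x,y)$, so that the argument of the logarithm becomes
$$\frac{\hat P(x,y)}{\hat P(y,x)} = \frac{\theta(y)^2 P(y,x)}{\theta(x)^2 P(x,y)},$$
giving $\log\left(\frac{\hat P(x,y)}{\hat P(y,x)}\right) = 2\log\theta(y) - 2\log\theta(x) + \log\left(\frac{P(y,x)}{P(x,y)}\right)$. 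Plugging this in and writing $\hat P(x,y)\,dy\,d\mu(x) = \theta(y)P(y,x)\,dy\,dx$, I obtain $ep^{*}$ as a sum of three integrals. The term carrying $\log\left(\frac{P(y,x)}{P(x,y)}\right)$, after the change of variables (relabelling) $x\leftrightarrow y$ and using that $\mu$ is invariant for $L$, reproduces exactly $ep$ (note the sign flip inside the log is compensated by the swap of $x$ and $y$ in the kernel $P(y,x)$). The remaining two integrals, namely $\pm 2\int\int[\log\theta(y)-\log\theta(x)]\theta(y)P(y,x)\,dy\,dx$, I would show vanish: this is precisely the identity $\int (Lg)(x)\,d\mu(x)=0$ applied to $g = \log\circ\,\theta$ (the same trick used in the proof that $ep\ge 0$), combined with the companion identity coming from $\mathfrak L^{*}(1)=0$.

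The main obstacle — really the only place one must be careful — is bookkeeping: keeping track of which kernel ($P(x,y)$ versus $P(y,x)$) pairs with which reference measure after the substitution, and ensuring that the two "boundary-type" integrals involving $\theta$ genuinely cancel rather than merely appearing to. The cleanest way to handle this is to not expand everything at once but to first simplify the ratio $\hat P(x,y)/\hat P(y,x)$ symbolically, then note that the full integrand of $ep^{*}$ equals the integrand of $ep$ (after the $x\leftrightarrow y$ relabelling) plus a total-$\theta$-difference term, and finally invoke invariance of $\mu$ to kill the latter. A one-line alternative worth mentioning: one could instead argue that $ep$ depends on the process only through the antisymmetric part of $\log P$ weighted by the stationary current, which is odd under time reversal in a way that leaves its $\mu$-average unchanged — but the direct substitution above is more elementary and self-contained, so that is the route I would take.
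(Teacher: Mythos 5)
Your proposal is correct and follows essentially the same route as the paper: write $ep^{*}$ with the reversed kernel $\hat P(x,y)=\frac{\theta(y)}{\theta(x)}P(y,x)$, simplify $\hat P(x,y)/\hat P(y,x)=\frac{\theta^2(y)P(y,x)}{\theta^2(x)P(x,y)}$, kill the $\theta$-logarithm contribution via the stationarity identity (the paper phrases it as $\int (\mathfrak L^{*}g)\,d\mu=0$ with $g=\log\circ\,\theta^{2}$, which after your $x\leftrightarrow y$ relabelling is the same cancellation you invoke), and recover $ep$ from the remaining term by the change of variables $x\leftrightarrow y$ using $\theta(y)\,dy=d\mu(y)$. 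The only difference is presentational: the paper discards the $\theta^{2}$ factor inside the logarithm up front rather than expanding into three integrals, so no gap remains.
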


\begin{proof} Since $L(1)=0$, we have that $\int (\mathfrak{L}^{*} g)(x) \,d\mu(x) = 0$ for every continuous function $g$, For $g=\log\circ\,\theta^2$, we have that
$$ \int \int [\log(\theta^2(y))-\log(\theta^2(x))]P^*(x,y)dy d\mu(x)=0,$$
where $P^{*}(x,y)=\frac{\theta(y)}{\theta(x)}P(y,x)$. One can see that
\begin{eqnarray*}
ep^* &=& \int \int \log\left(\dfrac{P^*(x,y)}{P^*(y,x)}\right) P^*(x,y) dy d\mu(x)\\
&=& \int \int \log\left(\dfrac{\theta^2(y) P(y,x)}{\theta^2(x) P(x,y)}\right) P^*(x,y) dy d\mu(x)\\
&=& \int \int \log\left(\dfrac{P(y,x)}{ P(x,y)}\right) P^*(x,y) dy d\mu(x)\\
&=& \int \int \log\left(\dfrac{P(y,x)}{ P(x,y)}\right) \frac{\theta(y)}{\theta(x)} P(y,x) dy [\theta(x) dx]\\
&=& \int \int \log\left(\dfrac{P(y,x)}{ P(x,y)}\right) P(y,x) d\mu(y) dx\\
&=& \int \int \log\left(\dfrac{P(x,y)}{ P(y,x)}\right) P(x,y) dy d\mu(x) = ep.
\end{eqnarray*}
\end{proof}

\section{Expansiveness of the semi-flow $\Theta_t $ on $\mathcal{D}$} \label{SIK}
In this section, we consider an extended Skorohod space  $\hat{\mathcal{D}} $ of the \textit{c\`adl\`ag} paths $w: \R \to [0,1]$ and $\hat{\Theta}_t$, $t \in \mathbb{R},$ the bidirectional flow on $\hat{\mathcal{D}}$, acting on $w$ by translation to the left: $(\hat{\Theta}_t w)(s)= w(s+t).$ One can show that two paths on $\mathcal{D}$ that coincide up to time $t$ have a distance between them, using the  Skorokhod metric defined below, limited by $e^{-t}$. This means that given two paths of such type, it is possible to increase the distance by applying $\Theta_t$.

Let $\Lambda$ be the set of continuous functions $f$ such that
$$\gamma(\lambda):=\supess_{t \ge 0} |\log \lambda '(t)|<\infty$$
Moreover, recall the definition of the Skorokhod distance (see \cite{EK}):
$$ d(x,y) = \inf_{\lambda \in \Lambda} \left[ \gamma(\lambda) \vee \int_0^{\infty} e^{-u}d(x,y,\lambda,u)du  \right].$$

Let $\mathcal{D}^*$ be the set of paths $w:(-\infty,0]\to [0,1]$ continuous at left and with a limit at right. We can denote a typical path $w$ in $\hat{\mathcal{D}}$ as
$$w(s)=(w_1\vert w_2)(s) = \left\{\begin{array}{ll} w_1(s), & \textrm{ for }s<0, \\ w_2(s), & \textrm{ for } s\geq 0,\end{array}\right. $$
where $w_1\in \mathcal{D}^*$ and $w_2\in \mathcal{D}$. In this way, we can identify $\hat{\mathcal{D}}\mapsto \mathcal{D}^*\times D$ and define the projections $\Pi_1(w)=w_1$ and $\Pi_2(w)=w_2$. By convention, we will always use the time $t=0$ to set this.

From two paths $w_1\in \mathcal{D}^*$ and $w_2\in \mathcal{D}$, we can go to $\hat{\mathcal{D}}$ by $( w_1\vert w_2)$, then apply $\hat{\Theta}_{-t}$ and go back to $\mathcal{D}$ using $\Pi_2$. By doing this, we ended up with
$$\Pi_2(\hat{\Theta}_{-t}( w_1\vert w_2 ))(s) = (w_1\vert^t\, w_2)(s) := \left\{\begin{array}{ll} w_1(s-t), & \textrm{ for }s<t, \\ w_2(s-t), & \textrm{ for } s\geq t,\end{array}\right. $$
defined for $s\geq 0$, as Figure \ref{fig:paths} shows.
\begin{figure}[h]
\center
\includegraphics[scale=0.35]{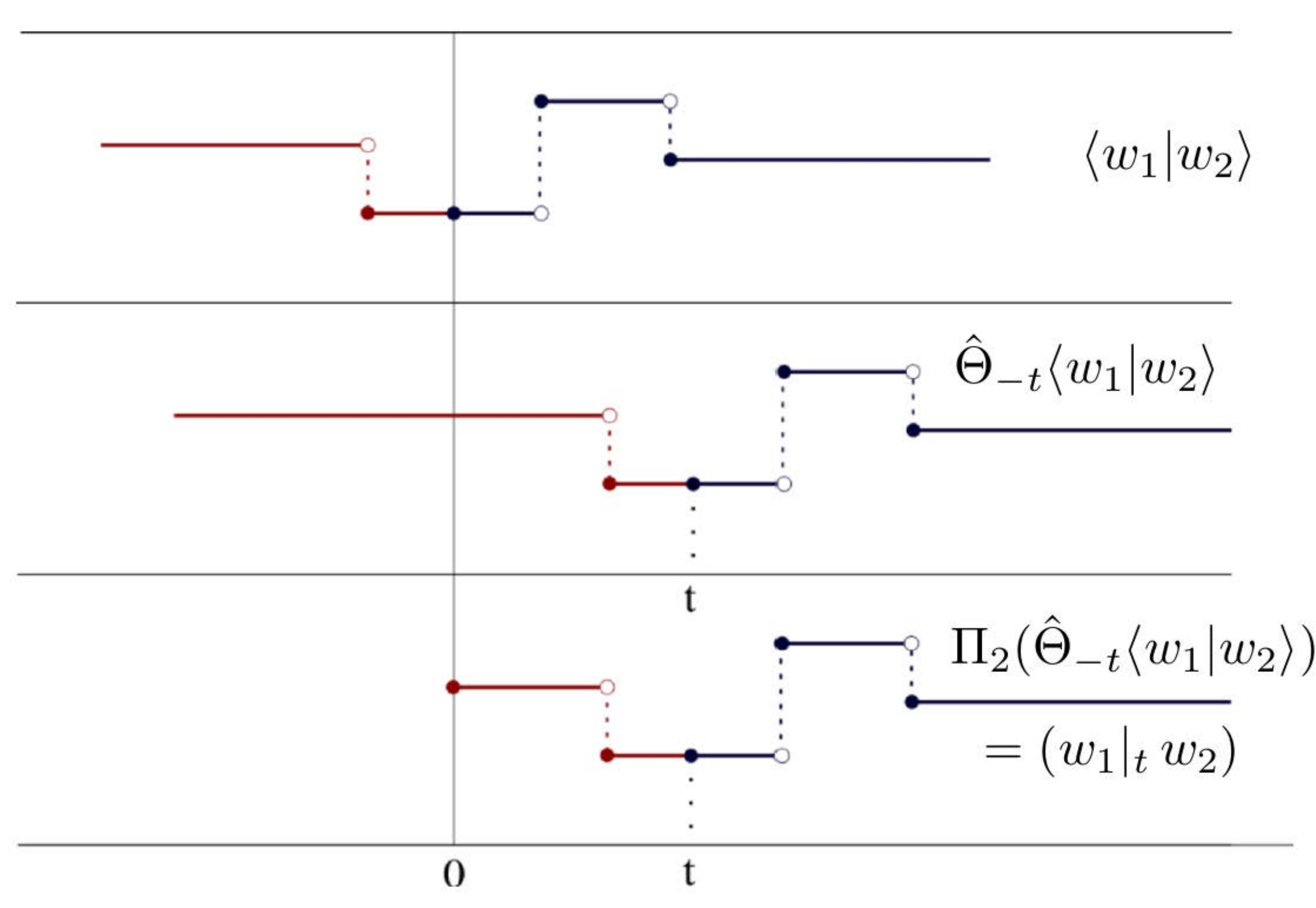}
	\caption{The bilateral shift and the projection $\Pi_2$}
	\label{fig:paths}
\end{figure}

\begin{theo}\label{excam} The continuous-time shift $\Theta_t$, acting on the  Skorokhod space $\mathcal{D}$, is expanding: given paths $w_1\in \mathcal{D}^*$ and $w_2, w'_2\in \mathcal{D}$, for all $t\geq 0$,
\begin{equation} \label{expand}
d\left((w_1|^t\, w_2),(w_1|^t\, w'_2)\right)\leq\int_t^\infty e^{-u}du=e^{-t}.
\end{equation}
\end{theo}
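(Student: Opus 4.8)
The plan is to bound the Skorokhod distance between the two concatenated paths $(w_1|^t\,w_2)$ and $(w_1|^t\,w'_2)$ directly by exhibiting a single convenient time-change $\lambda\in\Lambda$ — namely the identity $\lambda=\mathrm{id}$ — and then estimating the integral term in the definition of $d$. Since $\lambda(t)=t$ gives $\gamma(\lambda)=\supess_{t\ge0}|\log\lambda'(t)|=\supess_{t\ge0}|\log 1|=0$, the maximum $\gamma(\lambda)\vee\int_0^\infty e^{-u}d(x,y,\lambda,u)\,du$ reduces to the integral term alone, so the whole problem becomes controlling $\int_0^\infty e^{-u}d\big((w_1|^t\,w_2),(w_1|^t\,w'_2),\mathrm{id},u\big)\,du$.

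The key observation is that the two paths $(w_1|^t\,w_2)$ and $(w_1|^t\,w'_2)$ agree identically on $[0,t)$: both equal $s\mapsto w_1(s-t)$ there, regardless of $w_2$ versus $w'_2$. Hence, for the identity time-change, the local discrepancy functional $d(\cdot,\cdot,\mathrm{id},u)$ — which measures the sup distance between the two paths restricted to $[0,u]$ (capped at a fixed constant, with values in $[0,1]$ since the state space is $[0,1]$) — vanishes for every $u<t$, because on $[0,u]\subseteq[0,t)$ the two paths coincide. Therefore the integrand $e^{-u}d(\cdots,\mathrm{id},u)$ is zero for $u<t$, and for $u\ge t$ it is at most $e^{-u}$ because the distance functional never exceeds $1$ (the diameter of $[0,1]$) — in fact never exceeds the sup-distance bound used in the paper's normalization. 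Integrating gives
\begin{equation*}
d\big((w_1|^t\,w_2),(w_1|^t\,w'_2)\big)\le\int_0^\infty e^{-u}d\big(\cdots,\mathrm{id},u\big)\,du=\int_t^\infty e^{-u}d\big(\cdots,\mathrm{id},u\big)\,du\le\int_t^\infty e^{-u}\,du=e^{-t},
\end{equation*}
which is exactly \eqref{expand}.

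The main obstacle — and the step deserving the most care — is making precise the claim that $d(x,y,\lambda,u)=0$ whenever the two paths coincide on $[0,u]$ and $\lambda=\mathrm{id}$. This requires pinning down the exact definition of the local discrepancy functional $d(x,y,\lambda,u)$ from \cite{EK}: it is something like $\sup_{s\le u}\rho\big(x(s),y(\lambda(s))\big)$ (possibly with $s\wedge u$ in place of $s$), where $\rho$ is the metric on $[0,1]$. With $\lambda=\mathrm{id}$ this is $\sup_{s\le u}\rho(x(s),y(s))$, which is genuinely $0$ when $x\equiv y$ on $[0,u]$. I would state this definition explicitly at the start of the proof, verify $\mathrm{id}\in\Lambda$, observe that $(w_1|^t\,w_2)$ and $(w_1|^t\,w'_2)$ are equal on $[0,t)$ (and hence on $[0,u]$ for all $u<t$), and then the estimate above closes immediately. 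A secondary minor point is confirming that $d(x,y,\lambda,u)\le 1$ for all $u$, which holds because $[0,1]$ has diameter $1$ under the standard metric; this justifies the bound $e^{-u}d(\cdots)\le e^{-u}$ on the tail $[t,\infty)$.
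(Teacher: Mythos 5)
Your proposal is correct and follows essentially the same route as the paper's proof: choose the identity time-change so that $\gamma(\mathrm{id})=0$, note that the two concatenated paths coincide on $[0,u]$ for every $u<t$ so the local discrepancy vanishes there, and bound the capped discrepancy by $1$ on $[t,\infty)$ to get $\int_t^\infty e^{-u}\,du=e^{-t}$. The points you flag as needing care (the exact form of $d(x,y,\lambda,u)$ as a capped sup over $s\wedge u$, and the bound by the diameter $1$) are precisely the ingredients the paper uses, so nothing further is missing.
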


\begin{proof}

Fix $I$ as the identity function. Then, $\gamma(I)=0$ and
\begin{eqnarray*}
d((w_1|^t\, w_2),(w_1|^t\, w'_2)) &\le& \int_0^\infty e^{-u} d((w_1|^t\, w_2),(w_1|^t\, w'_2),I,u) du\\
&=&\int_0^\infty e^{-u} \sup_{s\geq 0} q\left((w_1|^t\, w_2)(s\wedge u),(w_1|^t\, w'_2)(s\wedge u)\right)du,
\end{eqnarray*}
where $q=r\wedge 1$ with $r$ denoting the (Lebesgue) metric on the state space $[0,1]$.

For $u<t$, the distance $q$ above is $q(w_1(t-s\wedge u),w_1(t-s\wedge u))=0$. Otherwise, the distance $q$ is upper bounded by $1$. Then,
$$d\left((w_1|^t\, w_2),(w_1|^t\, w'_2)\right)\leq\int_t^\infty e^{-u}du=e^{-t}.$$
\end{proof}

\medskip

\appendix

\section{Appendix 1 - Existence of $K_t(x,y)$}\label{A1} 

In this section, we will show explicitly the existence of a function $K_t(x,y)$, which has a relation with the semigroup $e^{tL}$ given by equation \eqref{functK}. We can write $L=\mathcal{L} - I$, where $\mathcal{L}$ is acting on functions as $(\mathcal{L}f)(x)=\int f(y)P(x,y)dy$. One can write down the action of the powers $L^k$ which appear in $e^{tL}$ in a simple way using the Newton binomial, since $\mathcal{L}$ and $-I$ commute:
$$ L^k= (\mathcal{L}-I)^k = \sum_{j=0}^k {k \choose j} \mathcal{L}^j (-I)^{k-j} = \sum_{j=0}^k (-1)^{k-j} {k \choose j} \mathcal{L}^j,$$
where $\mathcal{L}^0(f)=I(f)=f$. To go further, we need to consider the following transition functions: for all $k\ge 2$,
$$ P^k(x,y):=\int \cdots \int P(x,z_1)P(z_1,z_2)\cdots P(z_{k-1},y) dz_1 dz_2... dz_{k-1}.$$

Of course, $P^1(x,y)=P(x,y)$ and $P^{k+1}(x,y) = \int P^k(x,z)P(z,y)dz$. Now, we state that
$$(\mathcal{L}^kf)(x)=\int f(y)P^k(x,y)dy,$$
for every $k\ge 1$. To verify this, one can use induction:
\begin{eqnarray*}
(\mathcal{L}^{k+1}f)(x)&=&\mathcal{L}^{k}(\mathcal{L}f)(x)\\
&=&\int (\mathcal{L}f)(y)P^k(x,y)dy\\
&=&\int \int f(z)P(y,z)dz \, P^k(x,y)dy\\ 
&=&\int f(z) \int P^k(x,y)P(y,z)dy \ dz\\
&=&\int f(z) P^{k+1}(x,z) dz.
\end{eqnarray*}
Above, to change the order of integration, we use the continuity of $P$ and $f$ over the compact state space or the continuity of $P$ and the boundedness of $f$ to assure that the integral is finite. 

Now, we can compute $L^k$:
\begin{eqnarray*}
(L^kf)(x)&=&\sum_{j=0}^k (-1)^{k-j} {k \choose j} (\mathcal{L}^jf)(x)\\
&=&(-1)^k f(x) + \sum_{j=1}^k (-1)^{k-j} {k \choose j} \int f(y)P^j(x,y)dy.
\end{eqnarray*}
Changing the order of terms, we get
\begin{eqnarray*}
(L^kf)(x)&=&(-1)^k f(x) + \int f(y) \left[\sum_{j=1}^k (-1)^{k-j} {k \choose j} P^j(x,y) \right] dy\\
&=&(-1)^k f(x) + \int f(y) \ Q_k(x,y)dy,
\end{eqnarray*}
where $Q_k(x,y)$ is the expression inside the bracket. Notice that
$$K_t(x,y)=\sum_{k=1}^\infty \frac{t^k}{k!} \ Q_k(x,y)$$
is our desired function, because
\begin{eqnarray*}
(e^{tL}f)(x)&=&f(x) + \sum_{k=1}^{\infty} \frac{t^k}{k!}(L^kf)(x)\\
&=& f(x) + \sum_{k=1}^{\infty} \frac{t^k}{k!} \left[ (-1)^kf(x) + \int f(y)  Q_k(x,y) dy \right] \\
&=& f(x) \sum_{k=0}^{\infty} \frac{(-t)^k}{k!} + \int f(y) \ \sum_{k=1}^{\infty} \frac{t^k}{k!}\ Q_k(x,y)\ dy \\
&=& f(x)e^{-t} + \int f(y) K_t(x,y) \ dy.
\end{eqnarray*}

Considering the dynamics involved, the first term, which cannot be merged into $K_t(x,y)$, corresponds to the probability of not observing any jump in the interval $[0,t]$.

\subsection{Properties of $K_t(x,y)$}

We denote by $P_t=e^{tL}$. Then, we calculate 
$$\partial_t (P_tf)(x)=-e^{-t}f(x)+\int f(y)(\partial_tK_t)(x,y)dy$$
and
\begin{eqnarray*}
L(P_tf)(x)&=&\int (P_tf)(y)P(x,y)dy - (P_tf)(x) \\
&=&  \int e^{-t} f(y)P(x,y)dy + \int \int f(z)K_t(y,z)dz  P(x,y) dy - (P_tf)(x) \\
&=& \int e^{-t} f(y)P(x,y)dy + \int f(z) \left( \int P(x,y)K_t(y,z)  dy \right) dz \\*
&&-e^{-t}f(x)-\int f(y)K_t(x,y)dy.
\end{eqnarray*}
Reordering the terms, we conclude that $L(P_tf)(x)$ is equal to
$$-e^{-t}f(x) + \int f(y) \left(-K_t(x,y)+e^{-t}P(x,y)+\int  P(x,z)K_t(z,y)dz  \right) dy.$$

As $P_t$ is the homogeneous semigroup generated by the infinitesimal generator $L$, the Kolmogorov equations imply that $L(P_tf) = \partial_t P_t(f) = P_t(Lf).$ From this, we conclude the equality of these two expressions for every $f$. Then,
$$\partial_t K_t(x,y) = - K_t(x,y) + e^{-t}P(x,y)+\int P(x,z)K_t(z,y)dz.$$

The above is equal to
$$\partial_t K_t(x,y) = L(K_t(\cdot,y))(x) + e^{-t}P(x,y)$$
and, if we write down the other equation $\partial_t P_tf=P_t(Lf)$, the only change is the last integral for $\int P(z,y)K_t(x,z) dz$, which results in
$$\partial_t K_t(x,y) = L^{*}(K_t(x,\cdot))(y) + e^{-t}P(x,y).$$

\medskip

Another way to explore $K_t(x,y)$ is looking to the property of semigroup: $P_s\circ P_t=P_{s+t}$. This leads us to
$$(P_{s+t}f)(x)=(e^{(s+t)L}f)(x)=e^{-(s+t)}f(x)+\int f(y)K_{s+t}(x,y)dy,$$
while $P_{t}(P_sf)(x)$ is equal to
\begin{eqnarray*}
&&e^{tL}(e^{sL}f)(x)\\
&=& e^{-t}(e^{sL}f)(x)+\int (e^{sL}f)(y)K_t(x,y)dy\\
&=& e^{-t}\left[e^{-s}f(x)+\int f(y)K_s(x,y)dy \right]\\
&&+ \int \left[e^{-s}f(y)+\int f(z)K_s(y,z)dz \right]K_t(x,y)dy \\
&=& e^{-(t+s)}f(x) + \int f(y)\left(e^{-t}K_s(x,y) + e^{-s}K_t(x,y)\right)dy\\
&&+ \int \int f(z)K_s(y,z)K_t(x,y)dzdy\\
&=&\int f(y)\left(e^{-t}K_s(x,y)+ e^{-s}K_t(x,y) + \int K_t(x,z)K_s(z,y)dz \right)dy\\
&&+e^{-(t+s)}f(x).
\end{eqnarray*}

This means
$$K_{s+t}(x,y)=e^{-t}K_s(x,y)+e^{-s}K_t(x,y)+\int K_t(x,z)K_s(z,y)dz.$$

Notice that the last equation is the expression (1.3.1) in \cite{BGL} for our transition function $p_t (y,dx) = K_t (x,y)dx+ e^{-t} \delta_y(dx)$.


\section{Appendix 2 - Existence of $K^V_t$} \label{A2}

In this section, we will show explicitly the existence of a function $K^V_t(x,y)$ which has a relation with the semigroup $e^{t(L+V)}$ given by equation \eqref{functKV}. Here, we are considering a general infinitesimal generator $L$ of a Markov jump process. We will analyze the equation \eqref{expLV} in terms of the graphic construction of the jump process $(X_t)$, i.e., we will use that the trajectories are piecewise constants:
$$\E_x \left[ \ e^{\int_0^t V(X_r)dr} f(X_t)\  \right] = \sum_{n=0}^\infty \E_x \left[ \ e^{\int_0^t V(X_r)dr} f(X_{T_n}) \1_{[T_n \leq t < T_{n+1}]} \ \right],$$
where $0=T_0<T_1<T_2<\cdots$ are the times that $X_t$ jumps.

The $n=0$ term of this sum represents the time before the first jump. In this case, we have $s<T_1$ and the process $X_s\equiv x$. Then, this first term is equal to 
$$e^{tV(x)}f(x)\p_x[\tau_0>t]=e^{tV(x)}f(x)e^{-t\lambda(x)},$$
where $\tau_0$ is a random variable with exponential distribution of parameter $\lambda(x)$.

For the terms $n\geq 1$, we need further analysis. For each $k$, set $x_k=X_{T_k}$ and let $\tau_k$ be a exponential random variable with parameter $\lambda(x_k)$. By this, under $\1_{[T_n \leq t < T_{n+1}]}$, we have
$$\int_0^t V(X_r)dr = \sum_{i=0}^{n-1} \tau_iV(x_i) + \left(t-\sum_{i=0}^{n-1} \tau_i\right)V(x_n).$$
Now, define
$$ \varphi^{n,V}_t(x_0,...,x_n)= \exp\left[\sum_{i=0}^{n-1} \tau_iV(x_i) + \left(t-\sum_{i=0}^{n-1} \tau_i\right)V(x_n) \1_{[T_n \le t < T_{n+1}]}\right].$$
Notice that, for a fixed $t$, all functions  $\varphi^{n,V}_t$ are null except for the one whose $n$ is equal to the number of jumps until time $t$. In this way, using the kernel $P(x,dy)=P(x,y)dy$, the $n$th term of the summation becomes
$$ \int \cdots \int \varphi^{n,V}_t(x_0,...,x_n) f(x_n) P(x_0,x_1)dx_1\cdots P(x_{n-1},x_n)dx_n.$$
This expression is equal to $\int Q^{n,V}_t(x,x_n) f(x_n) dx_n$ if we define
$$Q^{n,V}_t(x,x_n)= \int \cdots \int \varphi^{n,V}_t(x_0,...,x_n) P(x_0,x_1)\cdots P(x_{n-1},x_n) dx_1\cdots dx_{n-1}.$$

Finally,
\begin{eqnarray*}
 \left(e^{t(L+V)}f\right)(x) &=& e^{tV(x)}f(x)e^{-t\lambda(x)} + \sum_{n=1}^\infty \int Q^{n,V}_t(x,x_n) f(x_n) dx_n \\
&=& e^{t(V(x)-\lambda(x))} f(x) + \int \, \sum_{n=1}^{\infty} Q^{n,V}_t(x,y) f(y) dy \\
&=& e^{t(V(x)-\lambda(x))} f(x) + \int \,K^V_t(x,y) f(y) dy,
\end{eqnarray*}
where $K^V_t(x,y)=\sum\limits_{n=1}^{\infty} Q^{n,V}_t(x,y)$. Notice that $Q_t^{n,V}(x,y)\geq 0$ for all $t$. Furthermore, it is strictly positive when $n$ equals the number of jumps until time $t$. Then, $K^V_t(x,y)>0$, for every $x,y \in [0,1]$.

\subsection{Properties of $K^V_t$}

Now, we proceed in the same way we have done with $K_t$, looking for a differential equation that $K^V_t$ satisfies, in the case of $\lambda \equiv 1$. For the semigroup $P^V_t=e^{t(L+V)}$, we have $(L+V)(P^V_t f)=\partial_t P^V_t (f)=P^V_t((L+V)f)$. The middle term opens as
$$\partial_t P^V_t(f)(x) = (V(x)-1)e^{tV(x)-t}f(x) + \int \partial_t K^V_t(x,y)f(y)dy$$
while the last term is
\begin{eqnarray*}
&&P^V_t((L+V)f)(x)\\*
&=&e^{tV(x)-t}(L+V)(f)(x)+\int K^V_t(x,y)(L+V)(f)(y)dy\\
&=&e^{tV(x)-t}\left[\int P(x,y)f(y)dy + (V(x)-1)f(x)\right]\\
&&+\int K^V_t(x,y)(L+V)(f)(y)dy.
\end{eqnarray*}
We get that, for every $f$,
$$ \int \partial_t K^V_t(x,y)f(y)dy = e^{tV(x)-t}\int P(x,y)f(y)dy+\int K^V_t(x,y)(L+V)(f)(y)dy.$$

Using the definition of $L+V$, we can make a computation to rewrite the right-hand side of the above equation as 
$$\int\left[e^{tV(x)-t}P(x,y)+\int K_t^V(x,z)P(z,y)dz+K_t^V(x,y)(V(y)-1)\right]f(y)dy,$$
which means that
\begin{eqnarray*}
\partial_t K^V_t(x,y)&=&e^{tV(x)-t}P(x,y)+\int K_t^V(x,z)P(z,y)dz+K_t^V(x,y)(V(y)-1)\\
&=&e^{tV(x)-t}P(x,y) + (L^*+V)(K_t^V(x,\cdot))(y).
\end{eqnarray*}

Similarly, if we open the other equation $(L+V)(P^V_t f)=\partial_t P^V_t (f)$, we conclude
$$\partial_t K^V_t(x,y) = e^{tV(x)-t}P(x,y) + (L+V)(K_t^V(\cdot,y))(x).$$


\section{Appendix 3 - Another look of Feynman-Kac formula for symmetrical $L$}\label{RuelleKac}

Consider $X_t$ a continuous-time process with state space $[0,1]$ and infinitesimal generator $L$. 
Let $\mu$ a measure on $[0,1]$ such that the infinitesimal generator $L$ is selfadjoint in $\mathcal L^2(\mu)$, that is,    \begin{equation}\label{sim2}\int_{[0,1]}(Lf)(x)g(x)\,d\mu(x)=\int_{[0,1]}f(x)(Lg)(x)\,d\mu(x), \end{equation}
for all continuous functions $f,g:[0,1]\to \mathbb R$. Consider $\mathbb P_{\mu}$ the {\it apriori}
probability, which is induced by the initial measure $\mu$ and the infinitesimal generator $L$. And. denote by $\mathbb E_{\mu}$ the expectation concerning to the $\mathbb P_{\mu}$. Let  $V$ be a bounded measurable function on $[0,1]$ taking values on $\mathbb{R}$. Then, it is possible to show that, for any fixed $T>0$, we have
  \begin{equation}\label{sim3}
  \begin{split}
      \int_{\mathcal{D}}e^{\int_0^{t} V(w(r))\,dr}& f(w(t))\,g(w(0))\,d\mathbb P_{\mu}(w) \\&=\int_{\mathcal{D}} e^{\int_0^{t} V(w(r))\,dr} f(w(0))\,g(w(t))\,d\mathbb P_{\mu}(w) \,,
  \end{split}
\end{equation}
for all continuous functions $f,g:[0,1]\to \mathbb R$.

We start the proof of \eqref{sim3} by observing that it can be rewritten as 
    \begin{equation}\label{sim1}\mathbb E_{\mu} \Big[e^{\int_0^{t} V(X_r)\,dr} f(X_t)\,g(X_0)\Big] = \mathbb E_{\mu} \Big[e^{\int_0^{t} V(X_r)\,dr} f(X_0)\,g(X_t)\Big] \,.
\end{equation} Then we will prove \eqref{sim1},  to do this we 
    use \begin{equation}\label{explain_measure}
    \mathbb P_\mu[A]=\int_{[0,1]} \mathbb P_{x}[A]\, d\mu(x),
\end{equation} in the left-hand side of \eqref{sim1}, and we obtain
    \begin{equation}\label{Aa1}
    \begin{split}
        \mathbb E_{\mu} \Big[e^{\int_0^{t} V(X_r)\,dr} f(X_t)\,g(X_0)\Big] &= 
  \int_{[0,1]} \mathbb E_{x} \Big[e^{\int_0^{t} V(X_r)\,dr} f(X_t)\,g(X_0)\Big]\,d\mu(x)\\
  &= 
  \int_{[0,1]} \mathbb E_{x} \Big[e^{\int_0^{t} V(X_r)\,dr} f(X_t)\Big]\,g(x)\,d\mu(x)\\
  &=\int_{[0,1]}(P^V_tf)(x)\, g(x)\,d\mu(x)\,,
    \end{split}
    \end{equation}
    where the last equality is due to the expression 
    \begin{equation}\label{0}
P_{t}^V (f)(x)\,:=\, \mathbb E_{x} \big[e^{\int_0^{t} V(X_r)\,dr} f(X_t)\big]\,,
\end{equation} 
where  $P^V_t$ is the semigroup associated to the infinitesimal generator $L+V$. By \eqref{sim2}, we have that $L+V$ is selfadjoint, that is, 
    \begin{equation*}
   \begin{split}
       & \int_{[0,1]}(L+V)(f)(x)\;g(x)\,d\mu(x)= \int_{[0,1]}(Lf)(x)g(x)\,d\mu(x)+\int_{[0,1]}V(x)f(x)g(x)\,d\mu(x)\\&=\int_{[0,1]}f(x)\;(L+V)(g)(x)\,d\mu(x).
   \end{split}
    \end{equation*}
    Then the semigroup $P^V_t$ is associated with $L+V$ and it is selfadjoint too. Thus,
      \begin{equation}\label{Aa2}
    \begin{split}
\int_{[0,1]}(P^V_tf)(x)\, g(x)\,d\mu(x)=\int_{[0,1]}f(x)\,(P^V_tg)(x)\,d\mu(x)\,,
    \end{split}
    \end{equation}
    for all continuous functions $f,g:[0,1]\to \mathbb R$. Writing  the semigroup $P^V_t$ with the expression \eqref{0} and  using \eqref{explain_measure}, we get
      \begin{equation}\label{Aa3}
    \begin{split}
\int_{[0,1]}f(x)\,(P^V_tg)(x)\,d\mu(x)&=\int_{[0,1]}f(x)\,\mathbb E_{x} \Big[e^{\int_0^{t} V(X_r)\,dr} g(X_t)\Big]\,d\mu(x)\\
&=\int_{[0,1]}\mathbb E_{x} \Big[e^{\int_0^{t} V(X_r)\,dr} g(X_t)\, f(X_0)\Big]\,d\mu(x)\\
&=\mathbb E_{\mu} \Big[e^{\int_0^{t} V(X_r)\,dr} g(X_t)\, f(X_0)\Big]\,.
    \end{split}
    \end{equation}
    Putting \eqref{Aa1}, \eqref{Aa2}, and \eqref{Aa3} together we obtain \eqref{sim1}.

To conclude this Appendix section we use the expression \eqref{sim3} with a smooth function $g_n$, and 
taking $g_n(y)\to \textbf{1}_{x}(y)$, we get 
    \begin{equation*}
   \begin{split}
        \int_{\mathcal D}e^{\int_0^{t} V(w(r))\,dr} &f(w(t))\,\textbf{1}_{w(0)=x}\,d\mathbb P_{\mu}(w) \\=&\int_{\mathcal D} e^{\int_0^{t} V(w(r))\,dr} f(w(0))\,\textbf{1}_{w(t)=x}\,d\mathbb P_{\mu}(w) \,,
   \end{split}
\end{equation*}for all continuous function $f:[0,1]\to \mathbb R$ or using another notation
    \begin{equation*}
   \begin{split}
     \mathbb E_{\mu} \Big[e^{\int_0^{t} V(X_r)\,dr} \, f(X_t) \,\textbf{1}_{X_0=x}\Big]=
        \mathbb E_{\mu} \Big[e^{\int_0^{t} V(X_r)\,dr} \, f(X_0) \,\textbf{1}_{X_t=x}\Big]\,.
   \end{split}
\end{equation*}

\bigskip

Emails: 

$jojoknorst@gmail.com$

$arturoscar.lopes@gmail.com$

$gustavo.muller\_ nh@hotmail.com$ 

$neumann.adri@gmail.com$

\medskip


\end{document}